\def\doctype{}
\newcommand\one{\mathds{1}}
\newcommand\Q{\mathbb{Q}}
\newcommand\R{\mathbb{R}}
\newcommand{\comment}[1]{}
\numberwithin{equation}{section}
\let\oldsection\section
\newcommand\boldsection[1]{\oldsection{\bf #1}}
\newcommand\starsection[1]{\oldsection*{\bf #1}}
\renewcommand\section{\@ifstar\starsection\boldsection}
\newtheoremstyle{theorem}
  {12pt}		  
  {0pt}  
  {\sl}  
  {\parindent}     
  {\bf}  
  {. }    
  { }    
  {}     
\theoremstyle{theorem}
\newtheorem{thm}{Theorem}[section]  
\newtheorem{lemma}[thm]{Lemma}     
\newtheorem{conj}[thm]{Conjecture}
\newtheorem{prop}[thm]{Proposition}
\newtheoremstyle{definition}
  {12pt}		  
  {0pt}  
  {}  
  {\parindent}     
  {\bf}  
  {. }    
  { }    
  {}     
\theoremstyle{definition}
\newtheorem{ex}[thm]{Example}
\newtheorem{cons}[thm]{Construction}
\newcommand\rks{{\sc Remarks.} }
\renewcommand{\proofname}{Proof}
\renewenvironment{proof}[1][\proofname]{\par
  \pushQED{\qed}%
  \normalfont \partopsep=\z@skip \topsep=\z@skip
  \trivlist
  \item[\hskip\labelsep
        \scshape
    #1\@addpunct{.}]\ignorespaces
}{%
  \popQED\endtrivlist\@endpefalse
}
\renewcommand*\@maketitle{%
  \normalfont\normalsize
  \@adminfootnotes
  \@mkboth{\@nx\shortauthors}{\@nx\shorttitle}%
  \global\topskip42\p@\relax 
  \@settitle
  \ifx\@empty\authors \else {\vskip 1em
\vtop{\centering\shortauthors\@@par}} \fi
  \ifx\@empty\@date \else {\vskip 1em \vtop{\centering\@date\@@par}}\fi 
  \ifx\@empty\@dedicatory
  \else
    \baselineskip18\p@
    \vtop{\centering{\footnotesize\itshape\@dedicatory\@@par}%
      \global\dimen@i\prevdepth}\prevdepth\dimen@i
  \fi
  \@setabstract
  \normalsize
  \if@titlepage
    \newpage
  \else
    \dimen@34\p@ \advance\dimen@-\baselineskip
    \vskip\dimen@\relax
  \fi
} 
\renewcommand*\@adminfootnotes{%
  \let\@makefnmark\relax  \let\@thefnmark\relax
  \ifx\@empty\@subjclass\else \@footnotetext{\@setsubjclass}\fi
  \ifx\@empty\@keywords\else \@footnotetext{\@setkeywords}\fi
  \ifx\@empty\thankses\else \@footnotetext{%
    \def\par{\let\par\@par}\@setthanks}%
  \fi
\thispagestyle{titlepage}
}
\begin{document}

\title[Cone generated by triangles]{\large On the cone of weighted graphs\\ generated by triangles}

\author{Coen del Valle}
\address{\rm Coen del Valle: Mathematics and Statistics,
University of Victoria, Victoria, BC, Canada}
\email{cdelvalle@uvic.ca}

\author{Peter J.~Dukes}
\address{\rm Peter J. Dukes:
Mathematics and Statistics,
University of Victoria, Victoria, BC, Canada
}
\email{dukes@uvic.ca}

\author{Kseniya Garaschuk}
\address{\rm Kseniya Garaschuk:
Mathematics and Statistics,
University of the Fraser Valley, Abbotsford, BC, Canada
}
\email{kseniya.garaschuk@ufv.ca}

\date{\today}

\begin{abstract}
Motivated by problems involving triangle-decompositions of graphs, we examine the facet structure of the cone $\tau_n$ of weighted graphs on $n$ vertices generated by triangles.  Our results include enumeration of facets for small $n$, a construction producing facets of $\tau_{n+1}$ from facets of $\tau_n$, and an arithmetic condition on entries of the normal vectors.  We also point out that a copy of $\tau_n$ essentially appears via the perimeter inequalities at one vertex of the metric polytope.
\end{abstract}

\subjclass[2010]{05C70 (primary), 05C72, 52B12 (secondary)}
\thanks{This research is supported by NSERC grant 312595--2017}

\maketitle
\hrule

\bigskip

\section{Introduction}
\label{sec-intro}

\subsection{Overview}

The problem of partitioning the edges of a graph $G$ into triangles has been studied extensively.   When $G$ is a complete graph, the problem is equivalent to the existence of a Steiner triple system, an object dating back at least to the mid-19th century, \cite{Kirkman,Steiner}.  For general graphs, the decision problem for existence of a triangle decomposition is known \cite{DT} to be NP-complete.  In view of this, attention has turned to sufficient conditions for special graph families.  Planar graphs are considered in \cite{MvB}. The problem for certain $3$-partite graphs $G$ has been studied \cite{BKLOT,BD} for its connection to partial latin square completion.  Otherwise, most research on triangle decompositions has  focused on host graphs $G$ having high minimum degree.  A sequence of papers including \cite{Gus}, then \cite{BKLO,Dross} and more recently \cite{DP} have lowered the sufficient minimum degree threshold toward the conjectured limit of Nash-Williams, \cite{NW}.

Of course, for a graph (or even a multigraph) $G$ to admit a partition of its edges into triangles, it is necessary for the number of edges of $G$ to be a multiple of three, and for each degree of $G$ to be even.  These are `arithmetic' necessary conditions, since they arise from the integrality of triangle weights used.  Recent work on the problem often considers the fractional relaxation, in which triangle weights can be nonnegative reals.  Indeed, concerning the minimum degree threshold, the breakthrough result \cite{BKLO} essentially reduces the problem to its fractional relaxation.  

In the fractional relaxation, arithmetic conditions disappear, but there remain other necessary conditions.  
Suppose we map the vertices of $G$ into the unit interval.  The perimeter of every triangle of $G$ in this embedding is at most $2$.  It follows that for $G$ to have a triangle decomposition, the average length of an edge cannot exceed $2/3$.  But there exist graphs on $n$ vertices with minimum degree approaching $3n/4$ from below which fail this condition; for instance, a blow-up $C_4 \cdot K_{m}$ of the four-cycle by equal-sized cliques has, when vertices are mapped to $\{0,1\}$ according to a $2$-colouring of the underlying $C_4$, has average edge-length
$$\frac{(2m)^2}{(2m)^2+4\binom{m}{2}} > \frac{2}{3}.$$
This example was given (though analyzed somewhat differently) by Ron Graham at the end of Nash-Williams' note \cite{NW}.  Combining this with the arithmetic conditions, the conjecture attributed to Nash-Williams normally reads as follows.

\begin{conj}[\cite{NW}]
\label{nw-conj}
For sufficiently large $n$, every graph $G$ on $n$ vertices with even degrees, number of edges a multiple of three, and minimum degree at least $3n/4$ has an edge-decomposition into triangles.
\end{conj}

It is natural to call the condition on average edge length a `geometric' necessary condition since it is witnessed by a Euclidean embedding.  But there is an even more general geometric viewpoint, namely that a graph built as a nonnegative combination of triangles inherits `by convexity' any linear constraint on its triangles.

The purpose of this paper is to take some preliminary steps toward organizing geometric necessary conditions for triangle decompositions.  We introduce and study a polyhedral cone relevant for the problem (although the cone has essentially appeared before in some other contexts).  The description of this cone by facets seems very complicated, but offers an encoding of all constraints for (the fractional relaxation of) the triangle decomposition problem.  We believe an understanding of this cone is possibly useful for making further steps toward Conjecture~\ref{nw-conj}.

\subsection{Set-up and notation}

For our purposes, a \emph{weighted graph} on a vertex set $V$ is a function $f:\binom{V}{2} \rightarrow \R$ assigning a real number to each edge of the complete graph on $V$.  For $e \in \binom{V}{2}$, we say that $f(e)$ is the \emph{weight} of $e$.  A weighted graph is \emph{nonnegative} if every edge has nonnegative weight.  Alternatively, a nonnegative weighted graph on $V$ is a triple $(V,E,f)$, where $G=(V,E)$ is a (simple) graph and $f:E \rightarrow \R_+$ is an assignment of positive reals to the edges; here, it is understood that pairs in $\binom{V}{2}\setminus E$ get weight $0$.  A similar notion may be used in the presence of negative edges.  

Here we assume a finite vertex set, typically $V=[n]:=\{1,2,\dots,n\}$.  The set of weighted graphs forms a vector space of dimension $\binom{n}{2}$ over the reals.  Thus we may identify weighted graphs with vectors in $\R^{\binom{n}{2}}$.  We shall adopt the colexicographic order $\{1,2\}$, $\{1,3\}$, $\{2,3\}$, $\{1,4\}, \dots$ on $\binom{[n]}{2}$, in which edge $e$ precedes edge $f$ if and only if $\max(e \oplus f) \in f$, and index vectors accordingly.  In this way, for $1 \le m \le n$, the prefix of first $\binom{m}{2}$ coordinates in a vector corresponds to the subgraph induced by $\{1,2,\dots,m\}$.

Recall that a \emph{cone} in $\R^d$ is a set $\kappa$ which is closed under both addition and scalar multiplication by nonnegative reals.  The cone \emph{generated by}  $v_1,\dots,v_k$ is $\{\sum_{i=1}^k a_i v_i : a_i \ge 0\}$.  For instance, the set of nonnegative weighted graphs forms a cone corresponding to the nonnegative orthant of $\R^{\binom{n}{2}}$, and hence it is generated by the standard basis.

We are interested here in the cone $\tau_n \subset \R^{\binom{n}{2}}$ of weighted graphs on $n$ vertices generated by triangles.  
A triangle is understood to mean a copy of $K_3$ on $V$, that is, a weighted graph 
assuming the value $1$ on edges in a 3-subset $\{x,y,z\} \subseteq V$ and $0$ otherwise.
The cone $\tau_3$ is simply the ray $(1,1,1)\R$ in $\R^3$.  In the case $n=4$, it is easy to see that a weighted graph is a linear combination of triangles if and only if the sum of weights on any two disjoint edges is a constant.  For $n \ge 5$, the set of all triangles spans $\R^{\binom{n}{2}}$ by linear combinations; see Proposition~\ref{W5} to follow.  It follows that $\tau_n$ has full dimension $\binom{n}{2}$ if $n \ge 5$.

\subsection{Graph decompositions}

An $F$-\emph{decomposition} of a graph $G=(V,E)$  is a partition of its edge set $E$ into (edge sets of) subgraphs, each isomorphic to $F$.  We consider the case $F=K_3$, and also refer to a $K_3$-decomposition as a triangle decomposition.  A triangle decomposition of $K_n$ is equivalent to a Steiner triple system of order $n$, which exists if and only if $n \equiv 1$ or $3 \pmod{6}$.  

A \emph{fractional} $K_3$-\emph{decomposition} of $G$ is an assignment of nonnegative real weights to the triangles in $G$ such that, for every edge $e$ of $G$, the sum of weights assigned to triangles containing $e$ equals $1$.  From our formulation above, $G$ has a fractional $K_3$-decomposition if and only if it belongs to the cone $\tau_n$.  For dense graphs $G$, the recent paper \cite{DP} establishes the current record minimum degree threshold for existence of a fractional triangle decomposition.  Here and in what follows, $\delta(G)$ denotes the minimum degree of $G$.

\begin{thm}[\cite{DP}]
\label{dp}
For sufficiently large $n$, every graph $G$ on $n$ vertices with $\delta(G) > 0.827 n$ belongs to $\tau_n$.
\end{thm}

Barber, K\"uhn, Lo and Osthus showed in \cite{BKLO} showed that a minimum degree threshold sufficient for fractional $K_3$-decomposition is also roughly sufficient for the exact triangle decomposition problem.  This gives considerable motivation to studying degree thresholds for fractional triangle decompositions, and the cone $\tau_n$ in general.  Indeed, reducing $0.827$ to $\frac{3}{4}$ in Theorem~\ref{dp} would nearly establish Conjecture~\ref{nw-conj} except for some cases very close to the boundary.

\subsection{Organization}

We initiate a detailed study of $\tau_n$, especially its facets, and the connection with triangle decompositions of (weighted) graphs.  Section 2 contains some additional background relevant for our problem, including language for polyhedral cones and some reformulations of $\tau_n$. One such related object is the metric polytope, which we briefly discuss in Section~\ref{sec:metric}.
In Section 3, we identify some simple arithmetic constraints on entries of the normal vectors.  Then, in Section 4, we report on a computer-aided classification of facets of $\tau_n$ for $n \le 8$ (this being essentially contained in earlier computations on the metric polytope) and in addition push the computation to a  probably complete classification for $n=9$.  Section 5 contains a `vertex splitting' operation which generates many infinite families of facets. In spite of this partial inductive structure, there is a surprising level of complexity to $\tau_n$.  In Section 6, we examine a class of facets having a hybrid combinatorial-geometric structure.  From these, we produce a new class of graphs having no fractional triangle decomposition but minimum degree approaching $3n/4$ from below.

\section{Background}

\subsection{Cones}

First we review some background on cones.  For our purposes, all cones are assumed to be `polyhedral' (finitely generated).  Unless otherwise specified, cones are `pointed' ($u,-u \in \kappa$ implies $u=0$)  and of full dimension in their vector space.

Let $\kappa$ be a cone.  A {\em face} of $\kappa$ is a cone $\eta \subseteq \kappa$ such that for all 
$u \in \eta$, if $u= u_1 + u_2$ with $u_1,u_2 \in \kappa$, then $u_1,u_2 \in \eta$.
A face of dimension 1 is called an {\em extremal ray} of $\kappa$, while a face of codimension 1 is called a {\em facet} of $\kappa$.  
Two extremal rays are \emph{adjacent} if they span a face of dimension 2.  Likewise, two facets are adjacent if they intersect in a face of codimension 2.  

The discussion from now on focuses on cones in real Euclidean space $\mathbb{R}^m$.  The usual inner product $\langle \cdot , \cdot \rangle$ is used.  
When matrices are involved, we adopt the convention that in $\langle a,b \rangle$, $a$ is a (dual) row vector and $b$ is an (ordinary) column vector.  

A \emph{supporting vector} for a cone $\kappa$ in $\R^m$ is a nonzero vector $y \in \R^m$ such that $\langle y,u \rangle \ge 0$ for all $u \in \kappa$.  From this inequality, $y$ defines a half-space containing $\kappa$.
A result of fundamental importance is that a cone $\kappa$ is the intersection of all half-spaces defined by supporting vectors of $\kappa$. Theorem~\ref{Farkas} below states this in the concrete setting which shall be used herein. 

Given an $m \times n$ matrix $A$, the set cone$(A) = \{ A x : x \in \R^n, x \geq 0 \}$ is a closed and polyhedral cone in $\R^m$.  The dimension of cone$(A)$ is equal to the rank of $A$.  The following well known result provides necessary and sufficient conditions for a point to belong to cone$(A)$.  
\begin{thm}[Farkas Lemma]
\label{Farkas}
Let $A$ be an $m \times n$ matrix over $\R$, and let $b \in \R^m$.
Then $A x = b$ has
a solution $x \ge 0$ if and only if $\langle y, b \rangle \geq 0$ for all $y \in \R^m$ such that $y A \geq 0$.  
\end{thm}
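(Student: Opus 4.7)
The plan is to prove the two directions separately. The forward implication is an immediate computation: if $Ax=b$ with $x\ge 0$, and $y\in\R^m$ satisfies $y^\top A \ge 0$ entrywise, then $\langle y,b\rangle = y^\top A x$ is the inner product of two nonnegative vectors, hence nonnegative. So the bulk of the work is in the converse, which I would prove contrapositively: assuming $b\notin \mathrm{cone}(A)$, I would construct a witness $y\in\R^m$ with $y^\top A\ge 0$ and $\langle y,b\rangle<0$.

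The natural tool is a separating hyperplane. Because $\mathrm{cone}(A) = \{Ax:x\ge 0\}$ is convex, non-empty, and (crucially) closed, and because $\{b\}$ is a compact convex set disjoint from it, the standard separation theorem for a closed convex set and a point yields $y\in\R^m$ and a constant $c\in\R$ with
\[
\langle y,b\rangle < c \le \langle y, Av\rangle \quad\text{for all } v\in\R^n \text{ with } v\ge 0.
\]
Taking $v=0$ gives $c\le 0$, so $\langle y,b\rangle<0$. For the coordinate vector $v=t e_j$ with $t>0$, we obtain $t\,(y^\top A)_j \ge c$ for all $t>0$; letting $t\to\infty$ forces $(y^\top A)_j\ge 0$. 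This holds for every $j$, giving $y^\top A\ge 0$ as required.

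The main obstacle, and the step that deserves careful attention, is justifying closedness of $\mathrm{cone}(A)$. This is the purely geometric content that makes Farkas work; the separating hyperplane theorem alone does not suffice, since without closedness one would only obtain a weak separation with $\langle y,b\rangle\le c\le\langle y,Av\rangle$, which is not enough to force strict inequality. I would handle this either by induction on $n$ (the number of columns of $A$), showing that the image under $v\mapsto Av$ of the nonnegative orthant is a finite union of closed polyhedral cones obtained via Fourier–Motzkin elimination, or by invoking the Minkowski–Weyl theorem that finitely generated cones coincide with finite intersections of closed half-spaces, which are manifestly closed. Either route is standard but technical; modulo this closedness fact, the rest of the argument is a one-page application of separation.
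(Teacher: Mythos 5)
Your proposal is correct, and it matches the paper's treatment where the paper gives one: the forward direction is proved in the paper's remark by exactly your computation $\langle y,b\rangle=\langle y,Ax\rangle=\langle y A,x\rangle\ge 0$, and the paper explicitly declines to prove the converse, noting only that it is ``deeper, asserting the existence of a separating hyperplane.'' Your separation argument is the standard way to supply that missing half, and you correctly isolate the real content, namely that $\mathrm{cone}(A)$ is closed --- without which strict separation fails. One small caution on your two suggested routes to closedness: the Minkowski--Weyl theorem is itself often derived from Farkas-type duality, so to avoid circularity it is safer to prove closedness directly, e.g.\ via Carath\'eodory's theorem for cones (every point of $\mathrm{cone}(A)$ is a nonnegative combination of a linearly independent subset of columns, each such sub-cone is closed as the image of the nonnegative orthant under an injective linear map, and a finite union of closed sets is closed) or via your Fourier--Motzkin induction. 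Modulo that choice, the argument is complete.
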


\rks  
One direction of this result is immediate.  Suppose $A x = b$ has a nonnegative solution $x \in \R^n$, and let $y$ be such that $yA \geq 0$. Then $\langle y,b \rangle = \langle y,Ax \rangle = \langle y A, x \rangle \geq 0.$
The converse asserts the existence of a `separating hyperplane' between cone$(A)$ and a point $b \not\in \text{cone}(A)$.

It is enough to check the condition in Theorem~\ref{Farkas} for $y$ corresponding to facets of cone$(A)$. Adapting the simplex algorithm or Fourier-Motzkin elimination gives a procedure to enumerate the facets of cone$(A)$.  Indeed, testing for membership in cone$(A)$ is a linear programming problem whose dual is described by Theorem~\ref{Farkas}.

\subsection{An inclusion matrix}

We return to the setting of (edge-weighted) graphs.
For a simple graph $G$ on vertex set $[n]$, let $\mathds{1}_G$ denote the characteristic vector of $E(G)$ in $\R^{\binom{n}{2}}$, where again coordinates are indexed by $\binom{[n]}{2}$.  That is,
$$\mathds{1}_G(e) = 
\begin{cases}
1 & \text{if $e \in E(G)$}, \\ 0 & \text{otherwise}.
\end{cases}$$
We define $W_n$ as the inclusion matrix of $2$-subsets versus $3$-subsets of $[n]$.  That is, for $e \in \binom{[n]}{2}$ and $f \in \binom{[n]}{3}$,
$$W_n(e,f) = 
\begin{cases}
1 & \text{if } e \subseteq f, \\ 
0 & \text{otherwise}.
\end{cases}$$
Alternatively, $W_n$ is the matrix whose columns are the characteristic vectors of all triangles in $K_n$.
It follows that $\tau_n=\mathrm{cone}(W_n)$.
That is, the existence of a fractional triangle decomposition of $G$ is equivalent to the existence of a nonnegative solution $x$ to $W_n x=\mathds{1}_G$.  

The following simple fact is well-known but is important for our analysis to follow.

\begin{prop}
\label{W5}
The $10 \times 10$ inclusion matrix matrix $W_5$ is invertible with 
\begin{equation}
\label{W5-inverse}
W_5^{-1}(f,e) =
\begin{cases}
1/3 & \text{if } |e \cap f|\in \{0,2\}, \\ 
-1/6 & \text{otherwise}.
\end{cases}
\end{equation}
\end{prop}

\begin{proof}
Let $f,f' \subseteq \{1,\dots,5\}$ with $|f|=|f'|=3$.
The inner product of row $f$ of the claimed inverse \eqref{W5-inverse} and column $f'$ of $W_5$ is computed in cases.  If $f'=f$, the product is $3 \cdot 1/3$.  If $|f' \cap f|=1$ or $2$, exactly one $e \subset f'$ satisfies $|e|=2$ and $|f \cap e| \in \{0,2\}$ (this being $e=f^c = f' \setminus f$ or $e=f \cap f'$, respectively).  In either of these cases, the inner product is $1/3-2 \cdot 1/6 = 0$. 
\end{proof}

Proposition~\ref{W5} is useful for checking whether a set $\mathcal{K}$ of triangles on vertex set $[n]$ spans $\R^{\binom{n}{2}}$ for $n >5$.  If there exists a set $S$ of five points, all of whose triangles can be spanned by $\mathcal{K}$, then $\mathds{1}_e$ can be spanned for each $e \in \binom{S}{2}$.  Applying this with different choices of $S$ can produce `new' triangles in the span.  Note we have identified triangles with their characteristic vectors, a slight abuse of language which is convenient in what follows.

\subsection{Some example facets}
\label{sec:ex-facets}

For results on fractional triangle decompositions, the Farkas lemma motivates a study of the facets of $\tau_n$.  A vector $y \in \mathbb{R}^{\binom{n}{2}}$, or alternatively an edge-weighted graph on $n$ vertices, is normal to a facet of $\tau_n$ if: 
(1) $\langle y,\mathds{1}_K \rangle \ge 0$ for all triangles $K$, and (2) the span of triangles $K$ for which $\langle y,\mathds{1}_K \rangle = 0$ has codimension 1.  We call such vectors `facet normals' in what follows.

\begin{ex}
\label{cut}
Let $n \ge 5$ and suppose $(A,B)$ is a partition of $[n]$ with $|A|,|B| \ge 2$. Consider the vector $y$ defined by
\begin{equation}
\label{cut-defn}
y(e) = 
\begin{cases}
2  & \text{if $e \subseteq \binom{A}{2} \cup \binom{B}{2}$}, \\ 
-1 & \text{otherwise}.
\end{cases}
\end{equation}
It is easy to check that $y$ as defined is nonnegative on triangles.  We show that $y$ is a facet normal of $\tau_n$.  Let $\mathcal{K}_0$ be the set of triangles crossing the partition $(A,B)$.  We have $y$ orthogonal to each triangle in $\mathcal{K}_0$.  Put $\mathcal{K}=\mathcal{K}_0 \cup \{K\}$, where $K$ is a triangle inside (say) $A$.  By Proposition~\ref{W5} and the discussion following it, any $\mathds{1}_e$, $e \in \binom{B}{2}$ is spanned by $\mathcal{K}$; this is seen by considering the five-point-set set $e \cup V(K)$.  It follows, then, that every triangle inside $B$ is spanned by $\mathcal{K}$.  Finally, by choosing three points in $B$ and two in $A$, one has every $\mathds{1}_e$, $e \in \binom{A}{2}$, spanned by $\mathcal{K}$, and this is enough to span the entire space.  It follows that $y$ is a facet normal of $\tau_n$.  

We call $y$ as in \eqref{cut-defn} an $(|A|,|B|)$-\emph{cut}.  Proposition~\ref{vertex-split} provides an alternate verification that such vectors are facet normals of $\tau_n$.  We note that there are exponentially many (in $n$) facets of this type.
\end{ex}

Suppose $4 \mid n$ and recall the graph $G=C_4 \cdot K_{n/4}$ mentioned in Section 1.  There exists an equipartition of the vertices $(A,B)$ so that the number of edges of $G$ within $A$ or $B$ equals $4 \binom{n/4}{2}=n^2/8-n/2$, while the number of edges crossing the partition equals $n^2/4$.  Taking $y$ to be the $(n/2,n/2)$-cut facet defined by $(A,B)$, we see that $\langle y, \mathds{1}_G \rangle < 0$.  In fact, this same cut witnesses many other graphs with minimum degree near $3n/4$ also failing to have a (fractional) triangle decomposition.  

\begin{ex}
\label{trivial}
Let $n \ge 6$.  For any $e \in \binom{n}{2}$, the vector $y = \mathds{1}_e$ is a facet normal of $\tau_n$, since $\dim y^\perp = \binom{n}{2}-1$.  We call (positive multiples of) such $y$ and their corresponding facets \emph{trivial}.
\end{ex}

In practice, it is simple to check whether a vector $y$ supports our cone.  For it to be a facet normal, it must be `indecomposable': if $y=y_1+y_2$ with $y_1$ and $y_2$ both supporting vectors, then $y_1=cy$ and $y_2=(1-c)y$ for some $c \in [0,1]$.  This, together with Example~\ref{trivial} leads to an easy observation.

\begin{lemma}
\label{zero-sum}
Let $y$ be a nontrivial facet normal of $\tau_n$.  Then every edge in $\binom{[n]}{2}$ is contained in a triangle $K$ such that $\langle y,\mathds{1}_K \rangle=0$.
\end{lemma}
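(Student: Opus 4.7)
The plan is to prove this by contradiction, exploiting the dual characterization of facets: the tight triangles span exactly the hyperplane $y^\perp$, and if some edge $e_0$ is missed by all tight triangles, that hyperplane is forced to coincide with a coordinate hyperplane, which makes $y$ trivial.

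Specifically, I would first observe that for any tight triangle $K$, the vector $\mathds{1}_K$ lies in $y^\perp$ by definition. Since $y$ is a nonzero vector in $\R^{\binom{n}{2}}$, $y^\perp$ has dimension $\binom{n}{2}-1$. The codimension-1 hypothesis in the definition of a facet normal says the span of $\{\mathds{1}_K : K \text{ tight}\}$ also has dimension $\binom{n}{2}-1$, so the two subspaces are equal: the tight triangles span precisely $y^\perp$.

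Now suppose for contradiction that some edge $e_0 \in \binom{[n]}{2}$ is contained in no tight triangle. Then every tight $K$ has $\mathds{1}_K(e_0) = 0$, so every vector in the span of the tight triangles lies in the coordinate hyperplane $H_{e_0} := \{v \in \R^{\binom{n}{2}} : v(e_0) = 0\} = \mathds{1}_{e_0}^{\perp}$. Combined with the previous paragraph, $y^\perp \subseteq H_{e_0}$; since both are hyperplanes of codimension $1$, we get $y^\perp = \mathds{1}_{e_0}^\perp$, and hence $y = c \cdot \mathds{1}_{e_0}$ for some scalar $c$. Testing against a triangle $K$ containing $e_0$, the facet-normal inequality $\langle y, \mathds{1}_K \rangle \ge 0$ yields $c \ge 0$, and $c \ne 0$ since $y$ is nonzero. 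Thus $y$ is a positive multiple of $\mathds{1}_{e_0}$, i.e., trivial, contradicting the hypothesis.

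The argument is essentially a dimension-counting squeeze and I do not anticipate a real obstacle; the only subtlety is confirming that the codimension-$1$ condition on the span of tight triangles is exactly what identifies this span with $y^\perp$ (rather than just placing it inside). Once this identification is made, the collapse to the trivial facet is immediate.
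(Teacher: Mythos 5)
Your proof is correct, but it takes a genuinely different route from the paper's. The paper argues by perturbation: if $e$ lies in no tight triangle, then every triangle through $e$ has strictly positive inner product with $y$, so for small $\epsilon>0$ the vector $y-\epsilon\,\mathds{1}_e$ still supports $\tau_n$; writing $y=(y-\epsilon\,\mathds{1}_e)+\epsilon\,\mathds{1}_e$ then contradicts the indecomposability of $y$ as an extremal ray of the dual cone unless $y$ is itself a positive multiple of $\mathds{1}_e$, i.e.\ trivial. You instead work directly from the paper's stated definition of a facet normal: the tight triangles span a codimension-one subspace contained in $y^\perp$, hence equal to $y^\perp$; if they all avoid $e_0$, that span lies in $\mathds{1}_{e_0}^\perp$, forcing $y^\perp=\mathds{1}_{e_0}^\perp$ and $y=c\,\mathds{1}_{e_0}$ with $c>0$. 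Your dimension-counting argument is self-contained given the definition and needs no appeal to duality or extremality, which is a genuine advantage in exposition; the paper's version is shorter and reuses the same device that appears in the proof of Proposition~\ref{bipG} (subtract a small multiple of another supporting vector and invoke indecomposability), so it fits the paper's toolkit. The only point worth making explicit in your write-up is that $y\neq 0$ (so that $y^\perp$ really is a hyperplane); this holds because the zero vector cannot satisfy the codimension-one condition once the triangles span $\R^{\binom{n}{2}}$, i.e.\ for $n\ge 5$, which is the regime in which the paper works.
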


\begin{proof}
This can be shown by direct verification for $n = 5$ using that the only facets of $\tau_5$ are $(2,3)$-cuts; see Proposition~\ref{W5}.  Suppose $n \ge 6$ and $y$ is a facet normal of $\tau_n$.  If $e$ is an edge in no such triangle, then it is possible to decrease the weight of $e$ in $y$ such that the resulting vector still supports $\tau_n$.  Therefore, $y$ is a positive multiple of $\mathds{1}_e$.
\end{proof}

\begin{ex}
\label{star}
Let $n \ge 6$.  The edge-weighted graph $y$ on vertex set $[n]$ with 
$$y(e) = 
\begin{cases}
-1  & \text{if $e=\{1,n\}$}, \\ 
1  & \text{if $e=\{i,n\}$ for $i \in \{2,\dots,n-1\}$}, \\ 
0 & \text{otherwise}
\end{cases}$$
is a facet normal of $\tau_n$.  This is easy to verify directly.  Any triangle avoiding vertex $n$ is orthogonal to $y$, as is any triangle containing edge $\{1,n\}$.
If we include with these triangles one of positive weight, say on $\{2,3,n\}$, it is possible to span the whole space by a similar argument as in Example~\ref{cut}.  For more details, including why $n \ge 6$ is needed, we refer the reader to \cite{Kseniya}.
\end{ex}

It is natural to call the facet $y$ described by Example~\ref{star} a \emph{star} facet of $\tau_n$.  It has a combinatorial interpretation for decomposition into triangles, albeit of very mild significance: the inequality $\langle y, \mathds{1}_K \rangle \ge 0$ is asserting that, should $G$ have a fractional triangle decomposition, there cannot exist any vertex of degree 1 in $G$.   The star $y$ centered at such a vertex and aligned so that the negative edge is the pendant edge would have $\langle y, \mathds{1}_G \rangle = -1$.

\subsection{Symmetry}

Consider the natural action of the symmetric group $\mathcal{S}_n$ on $\R^{\binom{[n]}{2}}$.  For an edge-weighted graph $y$ and permutation $\alpha \in \mathcal{S}_n$, we have $y^\alpha (e) = y(\alpha^{-1} e)$; that is, the action is induced on edges (edge weights) by permutations of the vertices.

Let us define the \emph{stabilizer} of $y$ to be stab$(y)=\{\alpha \in \mathcal{S}_n: y^\alpha = y \}$.  By the orbit-stabilizer theorem, the number of distinct edge-weighted graphs isomorphic to $y$ on $n$ vertices is $n!/|\text{stab}(y)|$.

Suppose $G$ has automorphism group $\Gamma$.  Testing whether $G$ is in the cone $\tau_n$ amounts to checking $\langle \overline{y}, \mathds{1}_G \rangle \ge 0$ on all $\overline{y}$ of the form $$\overline{y}=\frac{1}{|\Gamma|} \sum_{\alpha \in \Gamma} y^\alpha$$
for some facet normal $y$ of $\tau_n$.   The set of weighted graphs invariant under $\Gamma$ is a subspace of $\R^{\binom{[n]}{2}}$.  Therefore, its intersection with $\tau_n$ is a sub-cone. For example, if  $\Gamma=\mathcal{S}_a \times \mathcal{S}_b$ for $n=a+b$, then the $(a,b)$-cuts (and the nonnegative orthant) give a complete description; see \cite{DW}.   It would be interesting to study invariant sub-cones for other specific groups $\Gamma \le \mathcal{S}_n$.  

\subsection{A matrix formulation}

As an alternative to placing edge-weighted graphs in correspondence with $\mathbb{R}^{\binom{n}{2}}$, we can use the $n \times n$ symmetric matrices with zero diagonal.  Under this slight change in notation, $\tau_n$ is equivalent to the cone generated by the $\binom{n}{3}$ matrices 
\begin{equation}
\label{matrix-cone}
P^\top  \begin{bmatrix}
0 & 1 & 1 \\
1 & 0 & 1 \\
1 & 1 & 0 \\
\end{bmatrix} P \in \R^{n \times n},
\end{equation}
where $P$ is comprised of three rows of an $n \times n$ permutation matrix.
This alternate presentation has some advantages. For example, the characteristic polynomial $\chi_{y}(t)$ of the matrix corresponding to an edge-weighted graph $y$ is preserved under vertex permutation, making it a useful invariant.  Moreover, the factorization of $\chi_{y}(t)$ in $\Q[t]$ carries algebraic information, such as the stabilizer of $y$.

In this context, a graph $G$ has a fractional triangle decomposition if and only if its adjacency matrix $A_G$ is a conical combination of matrices of the form \eqref{matrix-cone}.
A referee suggested a nice variant on this: $G$ admits a fractional triangle-decomposition if and only if its Laplacian matrix $L_G:=D_G-A_G$, where $D_G$ is the diagonal matrix of degrees in $G$, is a conical combination of matrices of the form
\begin{equation}
\label{psd-cone}
P^\top  \left[ \begin{array}{rrr}
2 & -1 & -1 \\
-1 & 2 & -1 \\
-1 & -1 & 2 \\
\end{array} \right] P \in \R^{n \times n},
\end{equation}
where $P$ is as before.  A feature of this reformulation is that both $L_G$ and the matrices in \eqref{psd-cone} are positive semidefinite.

\subsection{The metric polytope}
\label{sec:metric}

The cone $\tau_n$ appears `locally' inside a well-studied polytope.  Recall that a \emph{metric} $d$ on a set $X$ is a function $d: X \times X \rightarrow \R$ such that, for all $x,y,z\in X$,
\vspace{-11pt}
\begin{enumerate}
\item $d(x, y) \geq 0$,
\item[(a$'$)] $d(x, y) = 0$   if and only if  $x = y$,
\item $d(x, y) = d(y, x)$, 
\item $d(x, z) \leq d(x, y) + d(y, z)$. 
\end{enumerate}
\vspace{-11pt}
A \emph{semi-metric} is a function that satisfies only conditions (a), (b) and (c). 
The \emph{metric cone} $\text{Met}_n$ consists of all semi-metrics on an $n$-set.  If we bound $\text{Met}_n$ by considering only those semi-metrics which satisfy the `perimeter inequalities'
\begin{equation}
\label{perim}
d(x, y) + d(x, z) + d(y,z) \le 2
\end{equation}
for all $3$-subsets $\{x,y,z\} \subseteq [n]$, then we obtain the \emph{metric polytope} $\text{met}_n$. Each of the triangle inequalities and perimeter inequalities defines a half-space bounding $\text{met}_n$.  Taking $X=[n]$ and writing $d_{ij}$ for $d(i,j)$, we can consider the metric $d$ as a vector $(d_{12},d_{13},d_{23},\dots,d_{n-1,n})$ in $\R^{\binom{n}{2}}$, and thus embed $\text{Met}_n$ and $\text{met}_n$ in $\R^{\binom{n}{2}}$.  

The point $(2/3,\dots,2/3)$ is a vertex of $\text{met}_n$ for $n>3$ since the vector obtained by incrementing any coordinate by $\epsilon>0$ and simultaneously decrementing a vertex-disjoint coordinate by $\epsilon$ fails to satisfy (\ref{perim}).  Near this vertex (within $2/9$ in the box norm) the triangle inequalities are automatically satisfied.  It follows that
$(2/3,\dots,2/3)-\text{met}_n$, near the origin, is described by the inequalities $a_{ij}+a_{ik}+a_{jk} \ge 0$ for $3$-subsets $\{i,j,k\}$.
The following result is an immediate consequence.

\begin{prop}
\label{met-n}
The halfspaces bounding $\tau_n$ admit a natural bijection with the edges of $\text{met}_n$ incident with $(2/3,\dots,2/3)$.
\end{prop}

\section{Arithmetic and combinatorial structure}

In this section, we sample some structure forced on facets of $\tau_n$, specifically on the entries of their normal vectors.
We begin with an easy observation on the relative sizes of extreme entries in facet normals.

\begin{prop}
\label{signs-mags}
Let $y$ be a nontrivial facet normal of $\tau_n$.  Let $a$ and $b$ denote, respectively, the maximum and minimum entry in $y$.  Then we have $-b/2 \le a \le -2b$.
\end{prop}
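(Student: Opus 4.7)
The plan is to apply Lemma~\ref{zero-sum} twice---once to an edge realizing the maximum positive weight $a$ and once to an edge realizing the minimum negative weight $b$---and in each case extract an inequality from the resulting zero-sum triangle. The elementary observation driving the argument is that if two real numbers sum to $s$, then one of them is at most $s/2$ and the other is at least $s/2$.

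First I select an edge $e^+$ with $y(e^+) = a$. By Lemma~\ref{zero-sum} there is a triangle $K$ containing $e^+$ with $\langle y,\mathds{1}_K\rangle = 0$. Writing $f_1,f_2$ for the other two edges of $K$, this forces $y(f_1) + y(f_2) = -a < 0$, so $\min\{y(f_1),y(f_2)\} \le -a/2$. By the definition of $b$ this gives $b \le -a/2$, which rearranges to $a \le -2b$.

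A symmetric argument yields the other inequality. Select an edge $e^-$ with $y(e^-) = b$, apply Lemma~\ref{zero-sum} to find a zero-sum triangle containing $e^-$, and observe that the remaining two weights sum to $-b > 0$; hence at least one of them is at least $-b/2$, and by the definition of $a$ we conclude $a \ge -b/2$.

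I do not anticipate any genuine obstacle. The argument is a clean consequence of Lemma~\ref{zero-sum} together with the averaging observation above; no additional structural property of nontrivial facet normals is required beyond the existence of both a positive and a negative entry, which is implicit in the hypothesis that $a$ and $b$ are defined (and which one can verify separately, since a facet normal of one sign would be forced by the codimension-1 condition on the span of its zero-sum triangles to be a positive multiple of some $\mathds{1}_e$, hence trivial).
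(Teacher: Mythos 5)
Your proof is correct and follows the same route as the paper's: apply Lemma~\ref{zero-sum} to an edge of extreme weight and average the remaining two weights of the resulting zero-sum triangle. Your version is in fact slightly cleaner, since you obtain both inequalities directly (observing that the smaller of two numbers summing to $-a$ is at most $-a/2$ and hence is a negative entry), whereas the paper splits into the cases $a \ge -b$ and $a < -b$ and proves one inequality in each; your closing remark on why $a$ and $b$ both exist for a nontrivial facet normal is a welcome addition that the paper leaves implicit.
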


\begin{proof}
First, suppose $a \ge -b$.  Using Lemma~\ref{zero-sum}, choose a triangle $K$ in $y^\perp$ containing an edge of weight $a$.  The other two edges of $K$ must have negative weight, since $a$ is largest among the entries in magnitude.  It follows that one of these other edges in $K$ has weight at most $-a/2$.  This establishes $a \le -2b$.  The case $a < -b$ is similar.
\end{proof}

\rks
It is easy to see that $a=1$, $b=-2$ is not possible for facets (although it is possible for supporting vectors).  Indeed, it may be the case that the first inequality can be strengthened to $a>-b/2$ or even $a \ge -b$.

Presumably, Proposition~\ref{signs-mags} only scratches the surface of constraints on the signs and relative magnitudes in facet normals.  We do not explore this further here, although it seems reasonable to guess that entries have some central tendency and are roughly symmetric about their mean.

\begin{prop}
\label{bipG}
Let $y$ be a facet normal of $\tau_n$ and let $G$ be the simple graph carrying the nonpositive entries of $y$.  If $G$ is bipartite, then it is complete bipartite.
\end{prop}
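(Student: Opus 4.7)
The plan is to show that $y$ must be a scalar multiple of the cut vector $y^*$ from Example~\ref{cut} associated with a bipartition $(A,B)$ of $G$; this will identify $G$ with the complete bipartite graph $K_{|A|,|B|}$. First, I note that $G$ is nonempty by Lemma~\ref{zero-sum}, so one can choose a bipartition $(A,B)$ of $[n]$ with both parts nonempty.

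Next I would define $y^*(e) = 2$ on within-part edges and $y^*(e) = -1$ on crossing edges, then run through a short case analysis on the number $k \in \{0,1,2,3\}$ of vertices of a triangle lying in $A$. This yields $y^*$-sums $6, 0, 0, 6$ respectively, so $y^*$ is a supporting vector of $\tau_n$ and its zero-sum triangles are exactly the triangles that cross $(A,B)$.

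The heart of the argument is the observation that every zero-sum triangle of $y$ must cross $(A,B)$: by bipartiteness, every within-part edge carries strictly positive $y$-weight, so a triangle lying entirely in $A$ or entirely in $B$ has strictly positive $y$-sum. Hence the zero-sum triangles of $y$ form a subset of those of $y^*$. Because $y$ is a facet normal, its zero-sum triangles span a hyperplane $H$, and consequently those of $y^*$ span a subspace containing $H$. This subspace cannot be all of $\R^{\binom{n}{2}}$, since the nonzero vector $y^*$ is orthogonal to it; so the two spans coincide with $H$. Thus both $y$ and $y^*$ lie in the one-dimensional subspace $H^\perp$, giving $y = cy^*$ for some scalar $c \ne 0$.

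The closing step is to rule out $c < 0$: in that case the nonpositive entries of $y$ would be precisely the within-part edges, so $G = K_{|A|} \cup K_{|B|}$; but $n \ge 5$ forces $\max(|A|,|B|) \ge 3$, producing a triangle in $G$ and contradicting bipartiteness. Hence $c > 0$ and $G = K_{|A|,|B|}$. The main obstacle I anticipate is conceptual rather than computational: leveraging the facet condition to promote the inclusion of zero-sum-triangle sets into an honest proportionality $y = cy^*$; the sign check at the end is routine.
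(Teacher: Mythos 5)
Your argument is correct, and it arrives at the same pivotal fact as the paper's proof --- that $y$ must be a positive multiple of the cut vector attached to the bipartition --- but it gets there by a genuinely different mechanism. The paper takes the cut normal $z$ for $(A,B)$ and observes that, since every within-part entry of $y$ is strictly positive, $y-\epsilon z$ still supports $\tau_n$ for small $\epsilon>0$; it then appeals to the indecomposability of $y$ as an extremal ray of the dual cone, so that the decomposition $y=(y-\epsilon z)+\epsilon z$ forces $z$ to be a positive multiple of $y$ (the paper phrases this as a flat contradiction, leaving the escape case $y\propto z$, i.e.\ $G$ complete bipartite, implicit). You instead exploit only the codimension-one condition in the paper's literal definition of a facet normal: the zero-sum triangles of $y$ all cross $(A,B)$, hence sit inside the zero-sum triangles of $y^*$; the former already span a hyperplane $H$, the latter span a subspace trapped between $H$ and $(y^*)^\perp\neq\R^{\binom{n}{2}}$, so both spans equal $H$ and $y,y^*$ are parallel in the line $H^\perp$. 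Your route avoids invoking the equivalence between facets and extremal rays of the dual cone (which the paper uses but never formally records) and makes the final sign check explicit, at the cost of being somewhat longer than the perturbation argument. Two small points to tidy: state explicitly that $(A,B)$ is a proper $2$-coloring of $G$ extended arbitrarily to the vertices isolated in $G$ (this is what guarantees every within-part edge is strictly positive), and note that the case $c<0$ can be dispatched even more directly --- it would make every edge of $G$ a within-part edge, while by construction every edge of $G$ crosses $(A,B)$, forcing $G=\emptyset$ against the nonemptiness you established at the outset.
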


\begin{proof}
Suppose $(A,B)$ is a bipartition of the vertices of $G$ such that all edges in $\binom{A}{2}$ and $\binom{B}{2}$ are positive. Let $z$ be the cut facet corresponding to $(A,B)$; see Example~\ref{cut}.  Since all entries of $y$ on edges within $A$ and $B$ are positive, it follows that for some $\epsilon>0$, $y-\epsilon z$ also supports $\tau_n$.  This is a contradiction to the indecomposability of $y$.
\end{proof}

Next, we offer a purely arithmetic constraint.  Let us say that a vector is in \emph{standard form} if its entries are integers with greatest common divisor equal to 1.

\begin{prop}
\label{mod3}
Let $n \ge 5$ and suppose $y$ is a facet normal of $\tau_n$ in standard form.  The following are equivalent:
\begin{enumerate}
\item
$\langle y,\mathds{1}_K \rangle \equiv 0 \pmod{3}$ for all triangles $K$;
\item
no entry of $y$ is $0 \pmod{3}$; and
\item
all entries of $y$ are $1 \pmod{3}$ or all entries of $y$ are $2 \pmod{3}$.
\end{enumerate}
\end{prop}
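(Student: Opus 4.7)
The plan is to prove the equivalence by establishing the four implications (c)~$\Rightarrow$~(a), (c)~$\Rightarrow$~(b), (a)~$\Rightarrow$~(c), and (b)~$\Rightarrow$~(c). The first two are immediate from $1+1+1=3$ and $2+2+2=6$, together with $1,2\not\equiv 0\pmod{3}$.

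For (a)~$\Rightarrow$~(c), I would reduce $y$ modulo $3$ to get $w:\binom{[n]}{2}\to \Z/3\Z$, fix vertex~$1$, and set $a_i := w(\{1,i\})$ for $i=2,\dots,n$. The triangle condition on $\{1,i,j\}$ forces $w(\{i,j\})\equiv -a_i-a_j\pmod{3}$; substituting into the triangle $\{i,j,k\}$ for $i,j,k\ge 2$ yields $-2(a_i+a_j+a_k)\equiv 0$, i.e.\ $a_i+a_j+a_k\equiv 0\pmod{3}$. Since $n\ge 5$, the set $\{2,\dots,n\}$ has at least four elements, so comparing two triples sharing a pair gives $a_k\equiv a_\ell\pmod{3}$, and hence all $a_i$ coincide modulo~$3$, say to a common value~$c$. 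The relation $w(\{i,j\})\equiv -2c\equiv c$ then shows every entry of $y$ is $\equiv c\pmod{3}$. If $c=0$, every entry is divisible by~$3$, contradicting the standard-form hypothesis; so $c\in\{1,2\}$ and (c) holds.

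For (b)~$\Rightarrow$~(c), I would argue by contradiction, assuming both classes $E^{(1)}:=\{e:y(e)\equiv 1\pmod{3}\}$ and $E^{(2)}:=\{e:y(e)\equiv 2\pmod{3}\}$ are nonempty. A zero-sum triangle $K$ has $\langle y,\mathds{1}_K\rangle=0$, so its three weights sum to $0$ mod~$3$; among residues in $\{1,2\}$ the only possibilities are $1+1+1$ and $2+2+2$, so every zero-sum triangle is contained in one of $E^{(1)},E^{(2)}$. Consequently the span $S$ of all zero-sum triangle indicators splits as $S=S_1\oplus S_2$ with $S_i\subseteq \R^{E^{(i)}}$. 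Since $y$ is a facet normal, $\dim S=\binom{n}{2}-1$, which forces (after possibly swapping labels) $\dim S_1=|E^{(1)}|$, so $S_1=\R^{E^{(1)}}$. For any $e\in E^{(1)}$, writing $\mathds{1}_e=\sum_{K}c_K\mathds{1}_K$ as a combination of zero-sum triangles and pairing with $y$ then gives $y(e)=\sum c_K\langle y,\mathds{1}_K\rangle=0$, contradicting $y(e)\equiv 1\pmod{3}$.

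The main obstacle is the (b)~$\Rightarrow$~(c) step: a first instinct is to propagate residues along zero-sum triangles using Lemma~\ref{zero-sum}, but this leads to a delicate connectivity question on an auxiliary graph whose vertices are edges of $K_n$. The argument above bypasses this entirely by using the codimension-one facet hypothesis through the direct-sum decomposition of $S$, which is the crucial place the facet property (as opposed to the weaker ``supporting vector'' property) enters. The implication (a)~$\Rightarrow$~(c) is where the hypothesis $n\ge 5$ is genuinely needed, to guarantee that two distinct triples of indices $\ge 2$ can share a pair.
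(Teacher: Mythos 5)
Your proof is correct. The implication (a)~$\Rightarrow$~(c) is essentially the paper's argument in different clothing: the paper deduces from the mod-$3$ triangle relations that $y$ agrees on opposite edges of every $4$-cycle and then uses $n\ge 5$ to conclude constancy, while you fix a base vertex and propagate a common residue $c$ through the relations $w(\{i,j\})\equiv -a_i-a_j$; both are elementary manipulations of the same congruences, and both invoke $n\ge 5$ at the same spot. The implication (b)~$\Rightarrow$~(c) is where you genuinely diverge. The paper works over $\Q$ and then $\F_3$: it picks an edge $f$ so that the zero-sum triangles together with $\mathds{1}_f$ span $\Q^{\binom{n}{2}}$, expresses an arbitrary $\mathds{1}_e$ in this spanning set, clears denominators, pairs with the all-ones vector to control the coefficient of $\mathds{1}_f$ modulo $3$, and finally pairs with $y$ to get $y_e\equiv y_f \pmod 3$. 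Your route instead observes that under (b) every zero-sum triangle is monochromatic for the partition $E^{(1)}\cup E^{(2)}$ (since $1+1+2$ and $1+2+2$ are not $0$ bmod $3$), so the span $S$ of zero-sum triangles decomposes as $S_1\oplus S_2$ inside complementary coordinate subspaces; the codimension-one hypothesis then forces one summand to be all of its coordinate subspace, and pairing a coordinate vector there with $y$ yields $y(e)=0$, a contradiction. This real-dimension count buys you a cleaner argument: it sidesteps the denominator bookkeeping in the paper's step $q\equiv p\pmod 3$ (which tacitly requires integrality of $q\sum_K a_K$), at the cost of being a proof by contradiction rather than directly exhibiting the common residue. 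Both proofs use the facet (codimension-one) hypothesis in an essential way, as you correctly note.
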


\begin{proof}
The implication (c) $\Rightarrow$ (a) is obvious, so it is enough to prove (a) $\Rightarrow$ (b) and (b) $\Rightarrow$ (c). 

Suppose $y$ satisfies (a) but that $y_e = 0$ for some edge $e=\{u,v\}$.  For every triangle involving $e$, the entries of $y$ on the other two edges must be different and nonzero (mod 3), using (a).  Since $n \ge 5$, there are at least three such triangles.  It follows that there is a $4$-cycle, say $w_1,u,w_2,v$ in which the entries of $y$ on its edges are $1,1,2,2 \pmod{3}$, in this order.  There is now no possible value for $y$ on $w_1w_2$: it must be $1 \pmod{3}$ from the triangle with $u$ and $2 \pmod{3}$ from the triangle with $v$.  This contradiction shows (a) $\Rightarrow$ (b).

For the next implication, suppose $y$ satisfies (b).  For $i=1,2$, let $E_i$ be the set of edges $e$ such that $y_e \equiv i \pmod{3}$.  
Let $\mathcal{K}$ be the set of all triangles $K$ such that $\langle y,\one_K \rangle = 0$.  Any such triangle must have all three edges in $E_1$ or all three edges in $E_2$, by (b).  Let $\mathcal{K}_i$ be the set of triangles $K$ such that $\langle y,\one_K \rangle = 0$ and all three edges belong to $E_i$.    
Now, take any edge $e$, and assume without loss of generality that $e \in E_1$.  Since $y$ is a facet-normal and $y_e \neq 0$, we have that $\{\one_K:K \in \mathcal{K}\} \cup \{\one_e\}$ spans $\R^{E_1 \cup E_2}$.  Examining the coordinates indexed by $E_2$, it follows that $\{\one_K:K \in \mathcal{K}_2\}$ spans $\{0\} \oplus \R^{E_2}$.  Letting $y_2$ denote the projection of $y$ onto $\R^{E_2}$, we have $\dim \{y_2\}^\perp = |E_2|$.  It follows that $y_2 = 0$, and this implies $E_2=\emptyset$.  This proves $y$ is constant (mod 3) and we have shown (c).
\end{proof}

As a result of Proposition~\ref{mod3}, we can partition all facets of $\tau_n$ into three categories according to their normal vectors in standard form.  It is natural to label these $0,1,2$ according to whether some zero is present, all entries are $1 \pmod{3}$, or all entries are $2 \pmod{3}$, respectively.
The trivial and star facets belong to category $0$.  The cut facets belong to category $2$.  Example~\ref{1mod3} to follow gives the first example of a facet in category $1$, which occurs for $n=7$.

The following example shows that Proposition~\ref{mod3} is somewhat tight.

\begin{ex}
The vector
$$\begin{array}{rrrrrrrrr}
(10,\\ 
8, & 6, \\
2, & 4, & 2, \\
2, & 4, & 2, & -4, \\
2, & 2, & -4, & 2, & 2, \\
-1, & -1, & 5, & 5, & -1, & -1, \\
-1, & -1, & 5, & 5, & -1, & -1, & 2, \\
-4, & -2, & -4, & 2, & 2, & 8, & 5, & 5, \\
-4, & -6, & 0, & 2, & 2, & 4, & 7, & 7, & 8) \\
\end{array}
$$  
\normalsize
is in standard form and normal to a facet of $\tau_{10}$.  There are only three entries congruent to $0 \pmod{3}$, namely $6,0,-6$; these are supported on edges joined in a path.
\end{ex}


\section{Classification for small $n$}

The case $n=5$ is the first in which $\tau_n$ has full dimension.  As a direct consequence of Proposition~\ref{W5}, the only facets of $\tau_5$ are the 
$(2,3)$-cuts, giving a total of 10 facets.  Moreover, their structure is `simplicial' (any two are adjacent) since the ten facet normals are linearly independent in $\R^{10}$.  In this section we enumerate facets (and the adjacent facets for each) for the next few values of $n$.
Small cases can be easily replicated using, e.g., the built-in function {\tt Cone()}  in Sage \cite{Sage}.  We remark that much of the enumerative work to follow has been done in the context of the metric cone, \cite{Deza}.


A classification of facets of $\tau_6$ is shown in Table~\ref{f6}.  We note that the examples in Section~\ref{sec:ex-facets}, namely trivial, star, $(3,3)$-cut, and $(2,4)$-cut, give a complete description of $\tau_6$.  In the table, each row gives a vector, with coordinates corresponding to the colex order on $\binom{[6]}{2}$ for an isomorphism class. The column with heading $\#$ gives the number of distinct copies induced under the action of $\mathcal{S}_6$.  The rightmost column gives the degree, or number of facets to which the corresponding facet is adjacent.
The three nontrivial isomorphism types are displayed as weighted graphs in Figure~\ref{f6fig}.  A $\{$blue,red$\}$-edge-coloring illustrates the positive and negative weights, with magnitudes as labeled.

\begin{table}[htpb]
\begin{tabular}{rlrr}
& representative & $\#$ & deg \\
\hline
1. & (1, 0, 0, 0, 0, 0, 0, 0, 0, 0, 0, 0, 0, 0, 0) & 15 & 32\\
2. & (1, 1, 0, 1, 0, 0, 1, 0, 0, 0, -1, 0, 0, 0, 0) & 30 & 14\\
3. & (2, 2, 2, -1, -1, -1, -1, -1, -1, 2, -1, -1, -1, 2, 2) & 10 & 57 \\
4. & (2, 2, 2, 2, 2, 2, -1, -1, -1, -1, -1, -1, -1, -1, 2) & 15 & 32\\
\hline
& total & 70
\end{tabular}
\caption{Isomorphism classes of facet normals of $\tau_6$, in standard form}
\label{f6}
\end{table}

\begin{figure}[htpb]
\begin{minipage}{.32\linewidth}
\begin{center}
\begin{tikzpicture}
\node at ( 1.732 , 1.0 ){$ 1 $};
\draw[blue! 100.0 !red] ( 2.0 , 0.0 )--( 1.0 , 1.732 );
\draw[blue! 100.0 !red] ( 2.0 , 0.0 )--( -1.0 , 1.732 );
\draw[blue! 100.0 !red] ( 2.0 , 0.0 )--( -2.0 , 0.0 );
\draw[blue! 100.0 !red] ( 2.0 , 0.0 )--( -1.0 , -1.732 );
\node at ( 1.732 , -1.0 ){$ -1 $};
\draw[red! 100.0 !blue,dashed] ( 2.0 , 0.0 )--( 1.0 , -1.732 );
\filldraw ( 2.0 , 0.0 ) circle [radius=.1];
\filldraw ( 1.0 , 1.732 ) circle [radius=.1];
\filldraw ( -1.0 , 1.732 ) circle [radius=.1];
\filldraw ( -2.0 , 0.0 ) circle [radius=.1];
\filldraw ( -1.0 , -1.732 ) circle [radius=.1];
\filldraw ( 1.0 , -1.732 ) circle [radius=.1];
\end{tikzpicture}
\end{center}
\end{minipage}
\begin{minipage}{.32\linewidth}
\begin{center}
\begin{tikzpicture}
\node at ( 1.732 , 1.0 ){$ 2 $};
\draw[blue! 100.0 !red] ( 2.0 , 0.0 )--( 1.0 , 1.732 );
\draw[blue! 100.0 !red] ( 2.0 , 0.0 )--( -1.0 , 1.732 );
\draw[blue! 100.0 !red] ( 1.0 , 1.732 )--( -1.0 , 1.732 );
\draw[red! 100.0 !blue,dashed] ( 2.0 , 0.0 )--( -2.0 , 0.0 );
\draw[red! 100.0 !blue,dashed] ( 1.0 , 1.732 )--( -2.0 , 0.0 );
\node at ( -1.732 , 1.0 ){$ -1 $};
\draw[red! 100.0 !blue,dashed] ( -1.0 , 1.732 )--( -2.0 , 0.0 );
\draw[red! 100.0 !blue,dashed] ( 2.0 , 0.0 )--( -1.0 , -1.732 );
\draw[red! 100.0 !blue,dashed] ( 1.0 , 1.732 )--( -1.0 , -1.732 );
\draw[red! 100.0 !blue,dashed] ( -1.0 , 1.732 )--( -1.0 , -1.732 );
\draw[blue! 100.0 !red] ( -2.0 , 0.0 )--( -1.0 , -1.732 );
\draw[red! 100.0 !blue,dashed] ( 2.0 , 0.0 )--( 1.0 , -1.732 );
\draw[red! 100.0 !blue,dashed] ( 1.0 , 1.732 )--( 1.0 , -1.732 );
\draw[red! 100.0 !blue,dashed] ( -1.0 , 1.732 )--( 1.0 , -1.732 );
\draw[blue! 100.0 !red] ( -2.0 , 0.0 )--( 1.0 , -1.732 );
\draw[blue! 100.0 !red] ( -1.0 , -1.732 )--( 1.0 , -1.732 );
\filldraw ( 2.0 , 0.0 ) circle [radius=.1];
\filldraw ( 1.0 , 1.732 ) circle [radius=.1];
\filldraw ( -1.0 , 1.732 ) circle [radius=.1];
\filldraw ( -2.0 , 0.0 ) circle [radius=.1];
\filldraw ( -1.0 , -1.732 ) circle [radius=.1];
\filldraw ( 1.0 , -1.732 ) circle [radius=.1];
\end{tikzpicture}
\end{center}
\end{minipage}
\begin{minipage}{.32\linewidth}
\begin{center}
\begin{tikzpicture}
\node at ( 1.732 , 1.0 ){$ 2 $};
\draw[blue! 100.0 !red] ( 2.0 , 0.0 )--( 1.0 , 1.732 );
\draw[blue! 100.0 !red] ( 2.0 , 0.0 )--( -1.0 , 1.732 );
\draw[blue! 100.0 !red] ( 1.0 , 1.732 )--( -1.0 , 1.732 );
\draw[blue! 100.0 !red] ( 2.0 , 0.0 )--( -2.0 , 0.0 );
\draw[blue! 100.0 !red] ( 1.0 , 1.732 )--( -2.0 , 0.0 );
\draw[blue! 100.0 !red] ( -1.0 , 1.732 )--( -2.0 , 0.0 );
\draw[red! 100.0 !blue,dashed] ( 2.0 , 0.0 )--( -1.0 , -1.732 );
\draw[red! 100.0 !blue,dashed] ( 1.0 , 1.732 )--( -1.0 , -1.732 );
\draw[red! 100.0 !blue,dashed] ( -1.0 , 1.732 )--( -1.0 , -1.732 );
\node at ( -1.732 , -1.0 ){$ -1 $};
\draw[red! 100.0 !blue,dashed] ( -2.0 , 0.0 )--( -1.0 , -1.732 );
\draw[red! 100.0 !blue,dashed] ( 2.0 , 0.0 )--( 1.0 , -1.732 );
\draw[red! 100.0 !blue,dashed] ( 1.0 , 1.732 )--( 1.0 , -1.732 );
\draw[red! 100.0 !blue,dashed] ( -1.0 , 1.732 )--( 1.0 , -1.732 );
\draw[red! 100.0 !blue,dashed] ( -2.0 , 0.0 )--( 1.0 , -1.732 );
\draw[blue! 100.0 !red] ( -1.0 , -1.732 )--( 1.0 , -1.732 );
\filldraw ( 2.0 , 0.0 ) circle [radius=.1];
\filldraw ( 1.0 , 1.732 ) circle [radius=.1];
\filldraw ( -1.0 , 1.732 ) circle [radius=.1];
\filldraw ( -2.0 , 0.0 ) circle [radius=.1];
\filldraw ( -1.0 , -1.732 ) circle [radius=.1];
\filldraw ( 1.0 , -1.732 ) circle [radius=.1];
\end{tikzpicture}
\end{center}
\end{minipage}
\caption{Weighted graphs for the nontrivial facet normals of $\tau_6$}
\label{f6fig}
\end{figure}

The following result is a straightforward consequence of the classification.

\begin{prop}
A multigraph on six vertices admits a fractional triangle decomposition if and only if it satisfies all inequalities corresponding to cuts and its underlying simple graph has no pendant vertices.
\end{prop}

%




For $n=7$, the results appear in Table~\ref{f7} with headings as before and weighted graphs in Figure~\ref{f7fig}.

\begin{table}[htpb]
\begin{tabular}{rlrr}
& representative & $\#$ & deg \\
\hline
1. & (1, 0, 0, 0, 0, 0, 0, 0, 0, 0, 0, 0, 0, 0, 0, 0, 0, 0, 0, 0, 0) & 21 & 340 \\
2. & (1, 1, 0, 1, 0, 0, 1, 0, 0, 0, 0, 1, 1, 1, -1, -1, 0, 0, 0, 0, 1) & 420 & 20\\
3. & (1, 1, 0, 1, 0, 0, 1, 0, 0, 0, 1, 0, 0, 0, 0, -1, 0, 0, 0, 0, 0) & 42 & 75 \\
4. & (2, 0, 0, 0, 0, 0, 0, 0, 0, 0, 0, 0, 0, 0, 0, -1, -1, 1, 1, 1, 1) & 105 & 75 \\
5. & (2, 2, 2, 2, 2, 2, -1, -1, -1, -1, -1, -1, -1, -1, 2, -1, -1, -1, -1, 2, 2) & 35 & 340\\
6. & (2, 2, 2, 2, 2, 2, 2, 2, 2, 2, -1, -1, -1, -1, -1, -1, -1, -1, -1, -1, 2) & 21 & 75\\
7. & (4, 4, -2, 1, 1, 1, 1, 1, 1, -2, -2, 4, -2, 1, 1, -2, -2, 4, 1, 1, 4) & 252 & 20\\
\hline
& total & 896
\end{tabular}
\caption{Isomorphism classes of facet normals of $\tau_7$, in standard form}
\label{f7}
\end{table}

\begin{figure}[htbp]
\begin{minipage}{.32\linewidth}
\begin{center}
\begin{tikzpicture}[scale=.75]
\node at ( 1.802 , 0.868 ){$ 1 $};
\draw[blue! 100.0 !red] ( 2.0 , 0.0 )--( 1.247 , 1.564 );
\draw[blue! 100.0 !red] ( 2.0 , 0.0 )--( -0.445 , 1.95 );
\draw[blue! 100.0 !red] ( 2.0 , 0.0 )--( -1.802 , 0.868 );
\draw[blue! 100.0 !red] ( 2.0 , 0.0 )--( -1.802 , -0.868 );
\draw[blue! 100.0 !red] ( 1.247 , 1.564 )--( -0.445 , -1.95 );
\draw[blue! 100.0 !red] ( -0.445 , 1.95 )--( -0.445 , -1.95 );
\draw[blue! 100.0 !red] ( -1.802 , 0.868 )--( -0.445 , -1.95 );
\node at ( -1.35 , -1.65 ){$ -1 $};
\draw[red! 100.0 !blue,dashed] ( -1.802 , -0.868 )--( -0.445 , -1.95 );
\draw[red! 100.0 !blue,dashed] ( 2.0 , 0.0 )--( 1.247 , -1.564 );
\draw[blue! 100.0 !red] ( -0.445 , -1.95 )--( 1.247 , -1.564 );
\filldraw ( 2.0 , 0.0 ) circle [radius=.1];
\filldraw ( 1.247 , 1.564 ) circle [radius=.1];
\filldraw ( -0.445 , 1.95 ) circle [radius=.1];
\filldraw ( -1.802 , 0.868 ) circle [radius=.1];
\filldraw ( -1.802 , -0.868 ) circle [radius=.1];
\filldraw ( -0.445 , -1.95 ) circle [radius=.1];
\filldraw ( 1.247 , -1.564 ) circle [radius=.1];
\end{tikzpicture}
\end{center}
\end{minipage}
\begin{minipage}{.32\linewidth}
\begin{center}
\begin{tikzpicture}[scale=.75]
\node at ( 1.802 , 0.868 ){$ 1 $};
\draw[blue! 100.0 !red] ( 2.0 , 0.0 )--( 1.247 , 1.564 );
\draw[blue! 100.0 !red] ( 2.0 , 0.0 )--( -0.445 , 1.95 );
\draw[blue! 100.0 !red] ( 2.0 , 0.0 )--( -1.802 , 0.868 );
\draw[blue! 100.0 !red] ( 2.0 , 0.0 )--( -1.802 , -0.868 );
\draw[blue! 100.0 !red] ( 2.0 , 0.0 )--( -0.445 , -1.95 );
\node at ( 1.85 , -0.9 ){$ -1 $};
\draw[red! 100.0 !blue,dashed] ( 2.0 , 0.0 )--( 1.247 , -1.564 );
\filldraw ( 2.0 , 0.0 ) circle [radius=.1];
\filldraw ( 1.247 , 1.564 ) circle [radius=.1];
\filldraw ( -0.445 , 1.95 ) circle [radius=.1];
\filldraw ( -1.802 , 0.868 ) circle [radius=.1];
\filldraw ( -1.802 , -0.868 ) circle [radius=.1];
\filldraw ( -0.445 , -1.95 ) circle [radius=.1];
\filldraw ( 1.247 , -1.564 ) circle [radius=.1];
\end{tikzpicture}
\end{center}
\end{minipage}
\begin{minipage}{.32\linewidth}
\begin{center}
\begin{tikzpicture}[scale=.75]
\node at ( 1.802 , 0.868 ){$ 2 $};
\draw[blue! 100.0 !red, line width=0.32mm] ( 2.0 , 0.0 )--( 1.247 , 1.564 );
\node at ( 1.85 , -0.9 ){$ -1 $};
\draw[red! 100.0 !blue,dashed] ( 2.0 , 0.0 )--( 1.247 , -1.564 );
\draw[red! 100.0 !blue,dashed] ( 1.247 , 1.564 )--( 1.247 , -1.564 );
\draw[blue! 100 !red] ( -0.445 , 1.95 )--( 1.247 , -1.564 );
\draw[blue! 100 !red] ( -1.802 , 0.868 )--( 1.247 , -1.564 );
\draw[blue! 100 !red] ( -1.802 , -0.868 )--( 1.247 , -1.564 );
\node at ( 0.445 , -1.95 ){$ 1 $};
\draw[blue! 100 !red] ( -0.445 , -1.95 )--( 1.247 , -1.564 );
\filldraw ( 2.0 , 0.0 ) circle [radius=.1];
\filldraw ( 1.247 , 1.564 ) circle [radius=.1];
\filldraw ( -0.445 , 1.95 ) circle [radius=.1];
\filldraw ( -1.802 , 0.868 ) circle [radius=.1];
\filldraw ( -1.802 , -0.868 ) circle [radius=.1];
\filldraw ( -0.445 , -1.95 ) circle [radius=.1];
\filldraw ( 1.247 , -1.564 ) circle [radius=.1];
\end{tikzpicture}
\end{center}
\end{minipage}

\bigskip

\begin{minipage}{.32\linewidth}
\begin{center}
\begin{tikzpicture}[scale=.75]
\node at ( 1.802 , 0.868 ){$ 2 $};
\draw[blue! 100.0 !red] ( 2.0 , 0.0 )--( 1.247 , 1.564 );
\draw[blue! 100.0 !red] ( 2.0 , 0.0 )--( -0.445 , 1.95 );
\draw[blue! 100.0 !red] ( 1.247 , 1.564 )--( -0.445 , 1.95 );
\draw[blue! 100.0 !red] ( 2.0 , 0.0 )--( -1.802 , 0.868 );
\draw[blue! 100.0 !red] ( 1.247 , 1.564 )--( -1.802 , 0.868 );
\draw[blue! 100.0 !red] ( -0.445 , 1.95 )--( -1.802 , 0.868 );
\draw[red! 100.0 !blue,dashed] ( 2.0 , 0.0 )--( -1.802 , -0.868 );
\draw[red! 100.0 !blue,dashed] ( 1.247 , 1.564 )--( -1.802 , -0.868 );
\draw[red! 100.0 !blue,dashed] ( -0.445 , 1.95 )--( -1.802 , -0.868 );
\node at ( -2.1 , 0.0 ){$ -1 $};
\draw[red! 100.0 !blue,dashed] ( -1.802 , 0.868 )--( -1.802 , -0.868 );
\draw[red! 100.0 !blue,dashed] ( 2.0 , 0.0 )--( -0.445 , -1.95 );
\draw[red! 100.0 !blue,dashed] ( 1.247 , 1.564 )--( -0.445 , -1.95 );
\draw[red! 100.0 !blue,dashed] ( -0.445 , 1.95 )--( -0.445 , -1.95 );
\draw[red! 100.0 !blue,dashed] ( -1.802 , 0.868 )--( -0.445 , -1.95 );
\draw[blue! 100.0 !red] ( -1.802 , -0.868 )--( -0.445 , -1.95 );
\draw[red! 100.0 !blue,dashed] ( 2.0 , 0.0 )--( 1.247 , -1.564 );
\draw[red! 100.0 !blue,dashed] ( 1.247 , 1.564 )--( 1.247 , -1.564 );
\draw[red! 100.0 !blue,dashed] ( -0.445 , 1.95 )--( 1.247 , -1.564 );
\draw[red! 100.0 !blue,dashed] ( -1.802 , 0.868 )--( 1.247 , -1.564 );
\draw[blue! 100.0 !red] ( -1.802 , -0.868 )--( 1.247 , -1.564 );
\draw[blue! 100.0 !red] ( -0.445 , -1.95 )--( 1.247 , -1.564 );
\filldraw ( 2.0 , 0.0 ) circle [radius=.1];
\filldraw ( 1.247 , 1.564 ) circle [radius=.1];
\filldraw ( -0.445 , 1.95 ) circle [radius=.1];
\filldraw ( -1.802 , 0.868 ) circle [radius=.1];
\filldraw ( -1.802 , -0.868 ) circle [radius=.1];
\filldraw ( -0.445 , -1.95 ) circle [radius=.1];
\filldraw ( 1.247 , -1.564 ) circle [radius=.1];
\end{tikzpicture}
\end{center}
\end{minipage}
\begin{minipage}{.32\linewidth}
\begin{center}
\begin{tikzpicture}[scale=.75]
\node at ( 1.802 , 0.868 ){$ 2 $};
\draw[blue! 100.0 !red] ( 2.0 , 0.0 )--( 1.247 , 1.564 );
\draw[blue! 100.0 !red] ( 2.0 , 0.0 )--( -0.445 , 1.95 );
\draw[blue! 100.0 !red] ( 1.247 , 1.564 )--( -0.445 , 1.95 );
\draw[blue! 100.0 !red] ( 2.0 , 0.0 )--( -1.802 , 0.868 );
\draw[blue! 100.0 !red] ( 1.247 , 1.564 )--( -1.802 , 0.868 );
\draw[blue! 100.0 !red] ( -0.445 , 1.95 )--( -1.802 , 0.868 );
\draw[blue! 100.0 !red] ( 2.0 , 0.0 )--( -1.802 , -0.868 );
\draw[blue! 100.0 !red] ( 1.247 , 1.564 )--( -1.802 , -0.868 );
\draw[blue! 100.0 !red] ( -0.445 , 1.95 )--( -1.802 , -0.868 );
\draw[blue! 100.0 !red] ( -1.802 , 0.868 )--( -1.802 , -0.868 );
\draw[red! 100.0 !blue,dashed] ( 2.0 , 0.0 )--( -0.445 , -1.95 );
\draw[red! 100.0 !blue,dashed] ( 1.247 , 1.564 )--( -0.445 , -1.95 );
\draw[red! 100.0 !blue,dashed] ( -0.445 , 1.95 )--( -0.445 , -1.95 );
\draw[red! 100.0 !blue,dashed] ( -1.802 , 0.868 )--( -0.445 , -1.95 );
\node at ( -1.35 , -1.65 ){$ -1 $};
\draw[red! 100.0 !blue,dashed] ( -1.802 , -0.868 )--( -0.445 , -1.95 );
\draw[red! 100.0 !blue,dashed] ( 2.0 , 0.0 )--( 1.247 , -1.564 );
\draw[red! 100.0 !blue,dashed] ( 1.247 , 1.564 )--( 1.247 , -1.564 );
\draw[red! 100.0 !blue,dashed] ( -0.445 , 1.95 )--( 1.247 , -1.564 );
\draw[red! 100.0 !blue,dashed] ( -1.802 , 0.868 )--( 1.247 , -1.564 );
\draw[red! 100.0 !blue,dashed] ( -1.802 , -0.868 )--( 1.247 , -1.564 );
\draw[blue! 100.0 !red] ( -0.445 , -1.95 )--( 1.247 , -1.564 );
\filldraw ( 2.0 , 0.0 ) circle [radius=.1];
\filldraw ( 1.247 , 1.564 ) circle [radius=.1];
\filldraw ( -0.445 , 1.95 ) circle [radius=.1];
\filldraw ( -1.802 , 0.868 ) circle [radius=.1];
\filldraw ( -1.802 , -0.868 ) circle [radius=.1];
\filldraw ( -0.445 , -1.95 ) circle [radius=.1];
\filldraw ( 1.247 , -1.564 ) circle [radius=.1];
\end{tikzpicture}
\end{center}
\end{minipage}
\begin{minipage}{.32\linewidth}
\begin{center}
\begin{tikzpicture}[scale=.75]
\node at ( 1.802 , 0.868 ){$ 4 $};
\draw[blue! 100.0 !red,line width=0.45mm] ( 2.0 , 0.0 )--( 1.247 , 1.564 );
\draw[red! 100.0 !blue,dashed] ( 2.0 , 0.0 )--( -0.445 , 1.95 );
\node at (-2.3 , 0 ){$ -2 $};
\draw[blue! 100.0 !red,line width=0.45mm] ( 1.247 , 1.564 )--( -0.445 , 1.95 );
\draw[blue! 100 !red] ( 2.0 , 0.0 )--( -1.802 , 0.868 );
\draw[blue! 100 !red] ( 1.247 , 1.564 )--( -1.802 , 0.868 );
\node at ( -1.247 , 1.564 ){$ 1 $};
\draw[blue! 100 !red] ( -0.445 , 1.95 )--( -1.802 , 0.868 );
\draw[blue! 100 !red] ( 2.0 , 0.0 )--( -1.802 , -0.868 );
\draw[blue! 100 !red] ( 1.247 , 1.564 )--( -1.802 , -0.868 );
\draw[blue! 100 !red] ( -0.445 , 1.95 )--( -1.802 , -0.868 );
\draw[red! 100.0 !blue,dashed] ( -1.802 , 0.868 )--( -1.802 , -0.868 );
\draw[red! 100.0 !blue,dashed] ( 2.0 , 0.0 )--( -0.445 , -1.95 );
\draw[red! 100.0 !blue,dashed] ( 1.247 , 1.564 )--( -0.445 , -1.95 );
\draw[blue! 100.0 !red, line width=0.45mm] ( -0.445 , 1.95 )--( -0.445 , -1.95 );
\draw[blue! 100 !red] ( -1.802 , 0.868 )--( -0.445 , -1.95 );
\draw[blue! 100 !red] ( -1.802 , -0.868 )--( -0.445 , -1.95 );
\draw[blue! 100.0 !red, line width=0.45mm] ( 2.0 , 0.0 )--( 1.247 , -1.564 );
\draw[red! 100.0 !blue,dashed]( 1.247 , 1.564 )--( 1.247 , -1.564 );
\draw[red! 100.0 !blue,dashed] ( -0.445 , 1.95 )--( 1.247 , -1.564 );
\draw[blue! 100 !red] ( -1.802 , 0.868 )--( 1.247 , -1.564 );
\draw[blue! 100 !red] ( -1.802 , -0.868 )--( 1.247 , -1.564 );
\draw[blue! 100.0 !red,line width=0.45mm] ( -0.445 , -1.95 )--( 1.247 , -1.564 );
\filldraw ( 2.0 , 0.0 ) circle [radius=.1];
\filldraw ( 1.247 , 1.564 ) circle [radius=.1];
\filldraw ( -0.445 , 1.95 ) circle [radius=.1];
\filldraw ( -1.802 , 0.868 ) circle [radius=.1];
\filldraw ( -1.802 , -0.868 ) circle [radius=.1];
\filldraw ( -0.445 , -1.95 ) circle [radius=.1];
\filldraw ( 1.247 , -1.564 ) circle [radius=.1];
\end{tikzpicture}
\end{center}
\end{minipage}
\caption{Weighted graphs for the nontrivial facet normals of $\tau_7$}
\label{f7fig}
\end{figure}

We note here the emergence of a facet normal (number 7) with all entries $1 \pmod{3}$.  In terms of triangle decompositions, this constraint has interesting implications which we consider in Section~\ref{sec:3bin}.

\begin{ex}
\label{1mod3}
The facet normal in row $7$ of Table~\ref{f7} has all entries $1 \pmod{3}$; it follows that both alternatives in Proposition~\ref{mod3}(c) are possible.
\end{ex}


The classification of facets of $\tau_8$ is displayed in Table~\ref{f8}.
The total count of 52367 facets also appears in \cite{Deza} as the degree of anti-cuts in the metric polytope $\text{met}_8$.

Here, for the first time, we encounter facets with all entries $2 \pmod{3}$ other than cuts.  We also notice that the trivial facet and balanced cut facets have by far the largest degrees; this is likely due to the relatively large number of triangles orthogonal to the corresponding vectors.

The number of facets of $\tau_n$ for $n=5,6,7,8$ is now sequence \href{https://oeis.org/A246427}{A246427} in the OEIS database; see \cite{OEIS}.

\small
\begin{table}[bthp]
\begin{tabular}{rlrr}
& representative & $\#$ & deg \\
\hline
1. & (1, 0, 0, 0, 0, 0, 0, 0, 0, 0, 0, 0, 0, 0, 0, 0, 0, 0, 0, 0, 0, 0, 0, 0, 0, 0, 0, 0) & 28 & 18848 \\
2. & (1, 1, 0, 1, 0, 0, 0, 0, 0, 0, 0, 0, 0, 0, 0, 0, 0, 0, 0, 0, 0, -1, 0, 0, 0, 1, 1, 1)& 560 & 82 \\
3. & (1, 1, 0, 1, 0, 0, 1, 0, 0, 0, 0, 1, 1, 1, -1, 0, 1, 1, -1, 1, 0, -1, 0, 0, 0, 0, 1, 1)& 3360 & 52 \\
4. & (1, 1, 0, 1, 0, 0, 1, 0, 0, 0, 1, 0, 0, 0, 0, 1, 0, 0, 0, 0, 0, -1, 0, 0, 0, 0, 0, 0)& 56 & 82\\
5. & (1, 1, 0, 1, 0, 0, 1, 0, 0, 0, 1, 0, 0, 0, 0, 0, 1, 1, 1, 1, -1, -1, 0, 0, 0, 0, 0, 1)& 840 & 902\\
6. & (2, 0, 0, 0, 0, 0, 0, 0, 0, 0, 0, 0, 0, 0, 0, 0, 0, 0, 0, 0, 0, -1, -1, 1, 1, 1, 1, 1)& 168 & 1580\\
7. & (2, 1, 1, 0, 0, 1, 0, 0, 1, 0, 0, 0, 1, 0, 0, 0, 0, -1, 0, 0, 0, -1, -1, 0, 1, 1, 1, 1)& 3360 & 125\\
8. & (2, 1, 1, 1, 1, 0, 1, 1, 0, 0, 1, 1, 0, 0, 0, 0, 0, 1, 1, 1, -1, -1, -1, 0, 0, 0, 0, 1)& 3360 & 245\\
9. & (2, 1, 1, 1, 1, 0, 1, 1, 0, 0, 1, 1, 0, 0, 0, -1, -1, 0, 0, 0, 0, -1, -1, 0, 0, 0, 0, 2)& 420 & 27\\
10. & (2, 2, 2, 0, 0, 0, 0, 0, 0, 0, 0, 0, 0, 0, 0, 0, 0, 0, 0, 0, 0, -1, -1, -1, 1, 1, 1, 1)& 280 & 347\\
11. & (2, 2, 2, 2, 2, 2, -1, -1, -1, -1, -1, -1, -1, -1, 2, -1, -1, -1, -1, 2, 2, -1, -1, -1, -1, 2, 2, 2)& 35 & 11878\\
12. & (2, 2, 2, 2, 2, 2, 2, 2, 2, 2, -1, -1, -1, -1, -1, -1, -1, -1, -1, -1, 2, -1, -1, -1, -1, -1, 2, 2)& 56 & 4641\\
13. & (2, 2, 2, 2, 2, 2, 2, 2, 2, 2, 2, 2, 2, 2, 2, -1, -1, -1, -1, -1, -1, -1, -1, -1, -1, -1, -1, 2) & 28 & 245\\
14. & (3, 2, 1, 2, 1, 0, 0, 1, 0, 0, 0, 1, 0, 0, 0, 0, -1, 0, 0, 0, 0, -2, -1, 0, 0, 2, 2, 2)& 10080 & 27\\
15. & (4, 4, -2, 1, 1, 1, 1, 1, 1, 4, 1, 1, 1, -2, -2, -2, 4, -2, 1, 1, 1, -2, -2, 4, 1, 1, 1, 4)& 2016 & 95\\
16. & (4, 4, 4, 4, 4, -2, 1, 1, 1, 1, 1, 1, 1, 1, -2, -2, -2, 4, -2, 1, 1, -2, -2, -2, 4, 1, 1, 4)& 5040 & 60\\
17. & (5, 5, 2, 2, -1, -1, 2, -1, -1, 2, -1, 2, 2, -1, -1, -1, 2, 2, -1, -1, 2, -4, -1, -1, 2, 2, 5, 5)& 2520 & 109\\
18. & (7, 7, 4, 4, 1, 1, 4, 1, 1, -2, 1, 4, -2, 1, 1, 1, -2, 4, 1, 1, -2, -5, -2, -2, 1, 1, 4, 4)& 10080 & 27\\
19. & (8, 5, -1, 5, -1, 2, 2, 2, 5, 5, 2, 2, -1, -1, -4, -4, 2, -1, -1, 2, 2, -4, -4, 5, 5, 2, 2, 8)& 10080 & 27\\
\hline
& total & 52367
\end{tabular}
\caption{Isomorphism classes of facet normals of $\tau_8$, in standard form}
\label{f8}
\end{table}
\normalsize


It is presently out of reach to compute and classify all facets of $\tau_n$ for $n \ge 9$.  However, for $n=9$, we sampled a large number of `random' facets using standard  elimination steps.  We found $143$ isomorphism classes of facets of $\tau_9$, accounting for nearly 12 million distinct facets.   This possibly represents a complete classification, since all but five types have had their neighborhoods exhaustively checked (and are adjacent to no new types).  The trivial and $(5,4)$-cut facets have by far the largest degrees and may be particularly challenging to fully check.  See \cite{DFPS} for more detail on the `adjacency decomposition' method for symmetric cones and polytopes.
A list of the known facets of $\tau_9$ can be found at the second author's webpage:
\url{http://www.math.uvic.ca/~dukes/facets-tri9.txt}

A new feature that emerges at $n=9$ is the existence of automorphism-free facets.

\begin{ex}
With coordinates given in colex order, the facet of $\tau_9$ which is normal to
$$(4, 2, 2, 2, 0, 0, 1, 1, -1, 1, 1, -1, -1, 1, 2, 0, 0, 2, 0, -1, 1, -1, 1, 1, -1, 0, 0, 1, -2, -2, 0, 2, 1, 3, 2, 3)$$
has no automorphisms, and hence generates $9!$ distinct facets of $\tau_9$ under the action of $\mathcal{S}_9$.
\end{ex}

We find it interesting that the number of facets of $\tau_9$ is already so large.  As $n$ grows, if the number of facets of $\tau_n$ exceeds $2^{\binom{n}{2}}$, then it would follow that certain inequalities are only useful to exclude (non-simple) multigraphs from the cone.  A starting estimate on the number of facets of $\tau_n$ via the metric polytope can be obtained from \cite{GYY}.

\section{Lifting facets}

Our aim here concerns lifting facets of $\tau_n$ to facets of $\tau_{n+1}$ via a `vertex splitting' operation.

\begin{prop}
\label{vertex-split}
Let $n \ge 5$ and suppose $y$ is a facet normal of $\tau_n$.  Suppose there exists a triangle $K$ on $[n-1]$ with $\langle y, \mathds{1}_K \rangle >0$.  Define the vector $y^{\text{spl}}$ on $\binom{[n+1]}{2}$ by
$$y^{\text{spl}}(e) = 
\begin{cases}
y(e)  & \text{if $e \subset [n]$}, \\ 
y(\{i,n\}) & \text{if $e=\{i,n+1\}$ for $i \in [n-1]$},\\
-2 \min \{y(\{i,n\}):i \in [n-1] \} & \text{if $e=\{n,n+1\}$}.
\end{cases}$$
Then $y^{\text{spl}}$ is a facet normal of $\tau_{n+1}$.
\end{prop}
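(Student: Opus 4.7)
The plan is first to verify nonnegativity of $\langle y^{\text{spl}}, \mathds{1}_T \rangle$ on every triangle $T$ of $K_{n+1}$, and then to show that $y^{\text{spl}}$ spans the one-dimensional orthogonal complement of the zero-sum triangles of $y^{\text{spl}}$ in $\R^{\binom{[n+1]}{2}}$.

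Setting $m := \min_{i \in [n-1]} y(\{i,n\})$, a case split on $|T \cap \{n,n+1\}|$ shows that $\langle y^{\text{spl}}, \mathds{1}_T \rangle$ equals $\langle y, \mathds{1}_T \rangle$ when $T \subset [n]$; equals $\langle y, \mathds{1}_{\{i,j,n\}} \rangle$ when $T = \{i,j,n+1\}$; and equals $2(y(\{i,n\}) - m)$ when $T = \{i,n,n+1\}$. Each is $\ge 0$, which also classifies the zero-sum triangles of $y^{\text{spl}}$ into three families: (A) the zero-sum triangles $Z_n$ of $y$; (B) triangles $\{i,j,n+1\}$ with $\{i,j,n\} \in Z_n$; and (C) triangles $\{i^*,n,n+1\}$ with $i^* \in I^* := \{i \in [n-1] : y(\{i,n\}) = m\}$.

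Now let $v \in \R^{\binom{[n+1]}{2}}$ be orthogonal to $\mathds{1}_T$ for every zero-sum triangle $T$ of $y^{\text{spl}}$. From (A) and the facet property of $y$ in $\tau_n$, $v|_{[n]} = \lambda y$ for some scalar $\lambda$. Set $w := v - \lambda y^{\text{spl}}$, and define $u_i := w(\{i,n+1\})$ for $i \in [n-1]$ together with $u_n := w(\{n,n+1\})$. Using the identity $y(\{i,j\}) = -y(\{i,n\}) - y(\{j,n\})$ valid whenever $\{i,j,n\} \in Z_n$, the orthogonality conditions coming from (B) and (C) simplify to $u_i + u_j = 0$ on every edge of the auxiliary graph $H^+$ on $[n]$ with
\[
E(H^+) := \{\{i,j\} \subset [n-1] : \{i,j,n\} \in Z_n\} \ \cup\ \{\{i^*,n\} : i^* \in I^*\}.
\]
If every connected component of $H^+$ contains an odd cycle, then $u \equiv 0$, and hence $v = \lambda y^{\text{spl}}$, which is the codimension-one condition required.

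Showing every component of $H^+$ has an odd cycle is the main obstacle, and the only place where the hypothesis on $K$ intervenes. Lemma~\ref{zero-sum} ensures every $i \in [n-1]$ lies in some $\{i,j,n\} \in Z_n$, and $I^*$ is nonempty, so $H^+$ has no isolated vertices. Assume for contradiction that some component $C$ of $H^+$ admits a bipartition $(A,B)$, and define $\chi : [n-1] \to \{-1,0,+1\}$ by $\chi(i) = +1$ on $A \cap [n-1]$, $\chi(i) = -1$ on $B \cap [n-1]$, and $\chi(i) = 0$ elsewhere. The vector $z \in \R^{\binom{[n]}{2}}$ given by $z(\{i,n\}) := \chi(i)$ and $z(e) := 0$ for $e \subset [n-1]$ satisfies $\langle z, \mathds{1}_T \rangle = 0$ for every $T \in Z_n$, because each relevant triangle $\{i,j,n\} \in Z_n$ corresponds to an edge $\{i,j\} \in E(H^+)$ which either crosses the bipartition of $C$ or lies outside $C$ (where $\chi \equiv 0$). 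The facet property of $y$ then gives $z = cy$, and since $\chi \not\equiv 0$ we have $c \ne 0$; but then the equation $z(e) = 0$ for $e \subset [n-1]$ forces $y$ to vanish on every pair in $\binom{[n-1]}{2}$, contradicting $\langle y, \mathds{1}_K \rangle > 0$.
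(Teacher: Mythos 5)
Your proof is correct, but it runs in the dual direction from the paper's. The paper argues in the primal: it shows that the zero-sum triangles of $y^{\text{spl}}$ together with the single positive triangle $K$ span all of $\R^{\binom{n+1}{2}}$ --- the facet structure of $y$ on $[n]$ spans every edge inside $[n]$, the mirror copy obtained by substituting $n+1$ for $n$ spans every edge inside $[n-1]\cup\{n+1\}$ (this is where $K\subset[n-1]$ is used, since $K$ survives in both copies), and the choice of weight $-2m$ on $\{n,n+1\}$ supplies one zero-sum triangle $\{j,n,n+1\}$ that captures the last edge. You instead compute the orthogonal complement of the zero-sum triangles directly, reducing the problem to the alternation system $u_i+u_j=0$ on the auxiliary graph $H^+$ and ruling out bipartite components by exhibiting a vector $z$ orthogonal to $Z_n$ that cannot be a multiple of $y$. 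Your bipartiteness contradiction is exactly the dual shadow of the paper's $n\leftrightarrow n+1$ symmetry argument, and it is correctly the place where you invoke $\langle y,\mathds{1}_K\rangle>0$ with $K\subset[n-1]$; the paper's version is shorter because the symmetry does that work for free, while yours makes the combinatorial structure of the zero-sum triangles around the split vertex explicit. Two cosmetic remarks: the identity $y(\{i,j\})=-y(\{i,n\})-y(\{j,n\})$ is not needed once you pass to $w=v-\lambda y^{\text{spl}}$, since $w$ vanishes identically on $\binom{[n]}{2}$; and your appeal to Lemma~\ref{zero-sum} technically presumes $y$ nontrivial, though the conclusion you actually use (each edge $\{i,n\}$ lies in some zero-sum triangle inside $[n]$) also holds for the trivial facet $\mathds{1}_e$ with $e\subset[n-1]$, so nothing breaks.
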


\begin{proof}
It is straightforward to check that $y^{\text{spl}}$ is nonnegative on all triangles.  Let $\mathcal{K}$ consist of $K$, together with all triangles $L$ on $[n+1]$ such that $\langle y^{\text{spl}}, \mathds{1}_L \rangle=0$.  Using that $y$ is a facet normal of $\tau_n$ and vertices $n, n+1$ are clones with respect to $y^{\text{spl}}$, every edge in $\binom{[n+1]}{2}$, except possibly $\{n,n+1\}$, is a linear combination of triangles in $\mathcal{K}$.  By the choice of weight on $\{n,n+1\}$, there exists $j$ such that the triangle on $\{j,n,n+1\}$ belongs to $\mathcal{K}$.  Then, since $\{j,n\}$ and $\{j,n+1\}$ are spanned by $\mathcal{K}$, so is $\{n,n+1\}$.
\end{proof}

The third author's dissertation presents a similar construction which allows copies of a facet of $\tau_n$ to be glued together on a common positive triangle to produce facets of $\tau_m$ for $m >n \ge 5$.  This allows for somewhat more general lifts of facets.  See \cite[Proposition 3.8]{Kseniya} for details.

Figure~\ref{small-splits} shows the effect of vertex splitting small facets; refer to Tables~\ref{f6}, \ref{f7}, \ref{f8} for facet labels, which are indicated in the figure as subscripts.

\begin{figure}[htbp]
\tikzstyle{vertex}=[circle, draw]
\begin{tikzpicture}[scale=0.75]
\node[vertex][](f5_1) at (10,10) {$5_1$};

\node[vertex][](f6_1) at (6,8) {$6_1$};
\node[vertex][](f6_2) at (8,8) {$6_2$};
\node[vertex][](f6_3) at (10,8) {$6_3$};
\node[vertex][](f6_4) at (12,8) {$6_4$};

\draw[->] (f5_1)--(f6_3);
\draw[->] (f5_1)--(f6_4);

\node[vertex][](f7_1) at (4,6) {$7_1$};
\node[vertex][](f7_2) at (6,6) {$7_2$};
\node[vertex][](f7_3) at (8,6) {$7_3$};
\node[vertex][](f7_4) at (10,6) {$7_4$};
\node[vertex][](f7_5) at (12,6) {$7_5$};
\node[vertex][](f7_6) at (14,6) {$7_6$};
\node[vertex][](f7_7) at (16,6) {$7_7$};

\draw[->] (f6_1)--(f7_1);
\draw[->] (f6_2)--(f7_3);
\draw[->] (f6_2)--(f7_4);
\draw[->] (f6_3)--(f7_5);
\draw[->] (f6_4)--(f7_5);
\draw[->] (f6_4)--(f7_6);

\node[vertex][](f8_1) at (1,4) {$8_1$};
\node[vertex][](f8_4) at (2.5,4) {$8_4$};
\node[vertex][](f8_5) at (4,4) {$8_5$};
\node[vertex][](f8_6) at (5.5,4) {$8_6$};
\node[vertex][](f8_7) at (7,4) {$8_7$};
\node[vertex][](f8_8) at (8.5,4) {$8_8$};
\node[vertex][](f8_9) at (10,4) {$8_9$};
\node[vertex][](f8_10) at (11.5,4) {$8_{10}$};
\node[vertex][](f8_11) at (13,4) {$8_{11}$};
\node[vertex][](f8_12) at (14.5,4) {$8_{12}$};
\node[vertex][](f8_13) at (16,4) {$8_{13}$};
\node[vertex][](f8_15) at (17.5,4) {$8_{15}$};
\node[vertex][](f8_16) at (19,4) {$8_{16}$};

\draw[->] (f7_1)--(f8_1);
\draw[->] (f7_2)--(f8_5);
\draw[->] (f7_2)--(f8_7);
\draw[->] (f7_2)--(f8_8);
\draw[->] (f7_3)--(f8_4);
\draw[->] (f7_3)--(f8_6);
\draw[->] (f7_4)--(f8_6);
\draw[->] (f7_4)--(f8_9);
\draw[->] (f7_4)--(f8_10);
\draw[->] (f7_5)--(f8_11);
\draw[->] (f7_5)--(f8_12);
\draw[->] (f7_6)--(f8_12);
\draw[->] (f7_6)--(f8_13);
\draw[->] (f7_7)--(f8_15);
\draw[->] (f7_7)--(f8_16);
\end{tikzpicture}
\caption{Vertex splitting facets of $\tau_n$ for $n=5,6,7$}
\label{small-splits}
\end{figure}

\begin{ex}
The weighted graph shown in Figure~\ref{bin-star} defines a facet of $\tau_8$.  Repeatedly applying Proposition~\ref{vertex-split} to pendant vertices gives an infinite family of facets of $\tau_n$, $n \ge 8$, as follows.  Given any partition $(A,B)$ of $\{3,\dots,n\}$ with $|A|,|B| \ge 3$, a facet arises from the normal vector $y$ defined by
\begin{equation}
\label{binary-star}
y(e) = 
\begin{cases}
-1  & \text{if $e =\{1,2\}$}, \\ 
1 & \text{if $e=\{1,a\}$ for $a \in A$, or $e=\{2,b\}$ for $b \in B$},\\
0 & \text{otherwise}.
\end{cases}
\end{equation}

\begin{figure}[htbp]
\begin{center}
\begin{tikzpicture}
\draw[color=red,dashed] (1,2) -- (5,2);
 \node at (3,2.3){$-1 $};
  \node at (0.2,1.2){$1 $};
\foreach \x in {0,1,2}{
\draw[color=blue] (\x,0) -- (1,2);
}
\foreach \x in {4,5,6}{
\draw[color=blue] (\x,0) -- (5,2);
}
\fill (1,2) circle [radius=.1]; \fill (5,2) circle [radius=.1]; 
\foreach \x in {0,1,2,4,5,6}{
\fill (\x,0) circle [radius=.1];
}
\end{tikzpicture}
\caption{The weighted graph corresponding to the `binary star' facet of $\tau_8$}
\label{bin-star}
\end{center}
\end{figure}
\end{ex}

The terminology `binary star' was used for such facets in \cite{Kseniya}.  These have some significance for triangle decompositions.  Consider the question of whether $G$ has a  fractional triangle decomposition under the assumption that it has at least $\frac{3}{4} \binom{n}{2}$ edges and has minimum degree $\delta(G) \ge cn$.  The binary star $y$ reveals that we cannot take $c<\frac{1}{2}$.  For instance, suppose $(A,B)$ is a roughly balanced partition of $\{3,\dots,n\}$.  Build the graph $G$ on vertex set $[n]$ from a clique on $A \cup B$, and join every vertex in $A$ to $2$, every vertex in $B$ to $1$, and finally include the edge $\{1,2\}$.  We have $\langle y,\mathds{1}_G \rangle = -1$ where $y$ is as in \eqref{binary-star}; indeed, the edge $\{1,2\}$ belongs to no triangle in $G$.

\section{A class of nearly-Euclidean facets}
\label{sec:3bin}

Recall the interpretation of cut facets as maps $V(G) \rightarrow \{0,1\}$, where the implied constraint on triangle decomposability of $G$ amounts to the perimeter bound.  In a sample of several hundred facets of $\tau_n$ in the range $8 \le n \le 12$, we notice many that roughly resemble a similar Euclidean embedding to the interval $[0,1]$.   Here, we examine in detail one such class of facets.

Let us start by considering an analog of cuts with three `bins': we let $\omega:[n] \rightarrow \{0,\frac{1}{2},1\}$, and define $y_\omega \in \R^{\binom{n}{2}}$ to have entries
$$y_\omega(\{i,j\}) = 4-6|\omega(i)-\omega(j)| \in \{-2,1,4\}.$$
With $A=\omega^{-1}(0)$, $B=\omega^{-1}(\frac{1}{2})$ and $C=\omega^{-1}(1)$, the corresponding weighted graph is depicted in Figure~\ref{y-omega}.  Note that when $B=\emptyset$, $y_\omega$ reduces to a cut with partition $(A,C)$.

\begin{figure}[htbp]
\begin{center}
\begin{tikzpicture}
\node at (0.5,2.25){$A$};
\draw[rounded corners] (0,0) rectangle (1,2);

\node at (2.5,2.25){$B$};
\draw[rounded corners] (2,0) rectangle (3,2);

\node at (4.5,2.25){$C$};
\draw[rounded corners] (4,0) rectangle (5,2);

\draw[color=blue] (1,1) -- (2,1);
\draw[color=blue] (3,1) -- (4,1);

\draw [color=red,dashed] (1,0.5) to[out=300,in=240] (4,0.5);

\node at (1.5,1.25){$1$};
\node at (3.5,1.25){$1$};
\node at (2.5,-.5){$-2$};

\foreach \x in {0,2,4}
 {
 \draw[color=blue,line width=0.45mm] (\x+0.5,0.5)--(\x+0.5,1.5);
 \node at (\x+0.2,1){$4$};
 }
\end{tikzpicture}
\caption{Weighted graph for the supporting vector $y_\omega$}
\label{y-omega}
\end{center}
\end{figure}

It is easy to see that $y_\omega$ supports $\tau_n$.
However, $y_\omega$ is not a facet normal unless $B = \emptyset$, since otherwise it is the sum of two cuts with partitions $(A \cup B,C)$ and $(A,B\cup C)$.  However, if we slightly modify $y_\omega$ by reducing some values in the middle bin $\binom{B}{2}$ from $4$ to $-2$, then in some cases the resulting vector is a facet normal.  For example, the facet of $\tau_7$ mentioned in Example~\ref{1mod3} has precisely this structure, where $A$ and $C$ are singletons and all edges in a cycle on the five vertices of $B$ have been reduced from $4$ to $-2$.  More generally, one can subtract $6 \mathds{1}_H$ from $y_\omega$, where $H$ is a maximal triangle-free graph with $V(H)=B$ and still have the resulting vector support $\tau_n$.
We note that, although maximal triangle-free, the complete bipartite graphs do not induce facets in this way.

\begin{prop}
The vector $y_\omega-6\mathds{1}_H$ is not a facet normal if $H$ is complete bipartite.
\end{prop}
\begin{proof}
Let $B_1 \cup B_2$ be the bipartition of $H$.  Then $y_\omega-6\mathds{1}_H = y_1 + y_2$, where
$y_1$ is the cut corresponding to $(A \cup B_1,C \cup B_2)$ and $y_2$ is the cut corresponding to $(A \cup B_2,C \cup B_1)$.  It follows that the given vector is not a facet normal.
\end{proof}

From the limited experimentation we have done, it appears the complete bipartite graphs may in fact be the only maximal triangle-free graphs which do not induce facets.  We now describe a class of graphs $H$ which always work.

A $C_5$-\emph{blow-up} is a graph obtained by replacing every vertex of a $5$-cycle by an independent set (of possibly different sizes).  
Such graphs are clearly maximal triangle-free; moreover, they can be constructed recursively by starting with a `seed' $C_5$, and adding vertices one at a time, joining them to maximal independent sets.

\begin{prop}
\label{c5-facets}
Let $\omega:[n] \rightarrow \{0,\frac{1}{2},1\}$ with nonempty level sets $A,B,C$.
Let $H$ be a $C_5$-blow-up spanning the vertex set $B=\omega^{-1}(\frac{1}{2})$.  Then $y_\omega-6\mathds{1}_H$ is a facet normal of $\tau_n$.
\end{prop}

\begin{proof}
By Proposition~\ref{vertex-split}, it suffices to consider the case $|A|=|C|=1$, so that $k:=|B|=n-2$.  Assume $k \ge 5$.
When $k=5$, the only candidate graph for $H$ is $C_5$, resulting in the facet normal of Example~\ref{1mod3}.
Suppose for induction hypothesis that the result holds for $n$ vertices, where $n \ge 7$.  Consider $\omega$ with level sets $A,B,C$, where $|A|=|C|=1$ and with a $C_5$-blow-up $H$ on the $k+1$ vertices of $B$.  This $H$ can be obtained by cloning a vertex, wlog $k$, in a $C_5$-blow-up $H'$ of order $k$ on vertex set $B'$.  Using the induction hypothesis, let $z$ be the normal to the $(1,k,1)$-facet based on $H'$.  Choose $e \in \binom{B' \setminus \{k\}}{2}$ which is not an edge of $H'$.  Then $e \cup A$ defines a triangle $K$ such that $\langle z,\mathds{1}_K \rangle = 1+1+4 > 0$.  By 
Proposition~\ref{vertex-split}, the vector $z^{\text{spl}}=y_\omega - 6 \mathds{1}_H$ obtained by splitting at $k$ produces a facet normal of $\tau_{n+1}$.  
\end{proof}

Let us call facets of the type described in Proposition~\ref{c5-facets} $C_5$-\emph{facets}.
The non-isomorphic $C_5$-blow-ups on $k$ vertices admit a natural bijection with the bracelets made from $5$ black beads and $k-5$ white beads.  The counting problem for such bracelets appears in the OEIS database as sequence \href{https://oeis.org/A032279}{A032279}, see ~\cite{OEIS}.  Let $a_k$ denote the $k$th term of this sequence.  By a formula of Robert Israel, $a_k = k^4/240+o(k^3)$.  It follows that there are
$$\sum_{k=5}^{n-2}\left\lfloor\frac{n-k}{2}\right\rfloor a_k = \Omega(n^6)$$ non-isomorphic $C_5$-facets of $\tau_n$.

To display the utility of these facets, we construct graphs of minimum degree near $3n/4$ which are excluded from $\tau_n$ by a $C_5$-facet but no cut facets.  The construction is a minor perturbation of the canonical example $C_4 \cdot K_{n/4}$ rejected by cut facets.

\begin{cons}
Fix $\epsilon>0$, and let $J_p$ denote a graph on $p$ vertices of degree $(1-\epsilon)p$.
Let $n=4p+5q$ and construct $G$ as the join $C_4 \cdot J_p + C_5 \cdot \overline{K_q}$ on vertex set $[n]$.

We build a $C_5$-facet based on $H=C_5 \cdot \overline{K_q}$ which witnesses that $G$ is not in $\tau_n$.
Assign the copies of $J_p$ `alternately' into $A$ and $C$, and let $H$ be the subgraph $C_5 \cdot \overline{K_p}$ on vertex set $B$.  Then, we compute
\begin{equation}
\label{c5-test}
\langle y_\omega - 6 \mathds{1}_H, \mathds{1}_G \rangle = 4 \times 2(1-\epsilon)p^2 +1 \times (4p)(5q)-2 \times (4p^2+ 5q^2) = -8\epsilon p^2+20pq-10q^2.
\end{equation}
If, instead, we attempt to place vertices of $H$ into $A \cup C$, we get at least $p^2$ edges of $H$ internal to $A$ or $C$.  It follows that such a cut $z$ satisfies
\begin{equation}
\label{cut-test}
\langle z,\mathds{1}_G \rangle \ge 8(1-\epsilon)p^2 + 20pq -4q^2.
\end{equation}
Let $p$ be large and take $q$ to be an integer near $(\frac{2}{5} \epsilon + \frac{8}{125} \epsilon^2) p$.  With this choice, the right side of \eqref{c5-test} is negative while the right side of \eqref{cut-test} is positive.  

The minimum degree of $G$ is $\delta(G)=(3-\epsilon)p+5q$, or roughly $\frac{75+25\epsilon+8\epsilon^2}{100+50\epsilon+8\epsilon^2}n$, which approaches (from below) the threshold in Conjecture~\ref{nw-conj}.  
\end{cons}

\section{Concluding Remarks}

We have seen many properties of the cone $\tau_n$, and connected its halfspace description with the triangle decomposition problem for graphs.  
We have pointed out several places where additional work could lead to a better understanding of facets. 
As another next step, we feel it would be useful to attempt an approximation of $\tau_n$ for large $n$ by a cone with simpler structure.  

The interested reader may wish to explore some related directions we have neglected to mention so far.
First, the cone $\tau_n$ even long ago attracted some interest in quantum physics for its connection with the `$N$-representability problem'; see for instance \cite{McRae}.  Next, it is worth mentioning the `cut cone' in $\R^{\binom{n}{2}}$, generated by all vertex cuts in the complete graph $K_n$. More information can be found in \cite{cut-cone-1,cut-cone-3}.  In particular, the `uniform cut cone' studied by Neto \cite{Neto} is closely related to $\tau_n$.

Finally, our cone $\tau_n$ is a special case of the family of cones introduced in \cite{DW}.  This more general setting considers the cone generated by the inclusion matrix of $t$-subsets $\binom{[n]}{t}$ versus $k$-subsets $\binom{[n]}{k}$, where $t,k,n$ are positive integers satisfying $k \ge t$ and $n \ge k+t$.  Alternatively, this is the cone of weighted $t$-uniform hypergraphs on $n$ vertices generated by $k$-vertex cliques $K_k^{(t)}$.  It was shown in \cite{DW} that various inequalities for $t$-designs arise from certain supporting vectors described by orthogonal polynomials.

\section*{Acknowledgements}

We are grateful to Richard M. Wilson, who introduced the second author to $\tau_n$, to Michel and Antoine Deza for information on the metric polytope leading to Proposition~\ref{met-n}, to  Haggai Liu, who supplied fast Python code to find lexicographically largest representatives for our library of isomorphism types of facet normals for $n \le 9$, and to the referees for careful reading which improved the manuscript from its initial state.

\end{document}